\numberwithin{equation}{section}
\newcommand{\SLE}{\mathrm{SLE}}
\title{On the Sobolev removability of the graph of one-dimensional Brownian motion}
\author{Cillian Doherty and Jason Miller}
\begin{document}
	\newpage
	\pagenumbering{arabic}

\maketitle

\begin{abstract}
Suppose that $B$ is a one-dimensional Brownian motion and let $\GG = \{ (t, B_t) : t \in [0,1]\}$ be the graph of $B|_{[0,1]}$. We characterize the Sobolev removability properties of $\GG$ by showing that $\GG$ is almost surely not $W^{1,p}$--removable for all $p \in [1, \infty)$ but is almost surely $W^{1,\infty}$--removable.
\end{abstract}
	
\setcounter{tocdepth}{1}
\tableofcontents

\parindent 0 pt
\setlength{\parskip}{0.20cm plus1mm minus1mm}

\section{Introduction}
\label{sec:intro}

\subsection{Definitions and background}
\label{subsec:def_and_background}

Suppose that $D \subseteq \C$ is open and $p \in [1,\infty]$.   We recall that the \emph{Sobolev space $W^{1,p}(D)$} consists of those functions $f \colon D \to \R$ such that $f \in L^p(D)$ and the derivative of $f$ exists in the weak sense and is in $L^p(D)$.  Suppose that $B$ is a standard one-dimensional Brownian motion and let $\GG = \{(t, B_t) \colon t \in [0,1]\} \subseteq \C$ be the graph of $B|_{[0,1]}$.   This paper is concerned with the \emph{Sobolev removability properties of $\GG$}, which we recall are defined as follows.

\begin{definition*}
    For $p \in [1,\infty]$ and a domain $D \subseteq \C$, a compact set $K \subseteq D$ is said to be \emph{$W^{1,p}$--removable inside $D$} if any continuous function $f \colon D \to \R$ which is in $W^{1,p}(D \setminus K)$ is in $W^{1,p}(D)$. 
\end{definition*}

We will in particular show that $\GG$ is a.s.\ not \emph{$\wp$--removable} for all $p \in [1,\infty)$ but is a.s.\ \emph{$\woi$--removable}, which completely characterizes when $\GG$ is Sobolev removable.  If $D_1$ and $D_2$ are both domains containing $K$ then $K$ is $W^{1,p}$--removable inside $D_1$ if and only if it is $\wp$--removable inside $D_2$ (see e.g. \cite{ntalampekos_removability_theorem}) meaning that $W^{1,p}$--removability is an intrinsic property of the set $K$ and does not depend on the domain $D$. Therefore, we are justified in saying that $K$ is simply $\wp$--removable, without reference to a domain $D$, if it is $\wp$--removable inside $\C$. The existence of a continuous function $f$ on some domain $D$ containing $K$, which is in $W^{1,p}(D\setminus K)$ but not in $W^{1,p}(D)$, certifies that $K$ is not $W^{1,p}$--removable. Such an $f$ is called an \emph{exceptional function}.

A related notion is that of \emph{conformal removability}, where a compact set $K \subseteq \C$ is said to be conformally removable if every homeomorphism $f\colon \C \to \C$ that is conformal on $\C\setminus K$ is conformal on all of $\C$.  If $1 \leq p < q \leq \infty$, any compact set $K$ that is $W^{1,p}$--removable is also $W^{1,q}$--removable, but the converse is false. It is also known that $W^{1,2}$--removability implies conformal removability; $W^{1,2}$--removability and conformal removability are in fact conjectured to be equivalent \cite{cned}, but this claim remains unproven. Due to examples we will mention in a moment, it is known that for any $p \in (2,\infty]$, $\wp$--removability does not imply conformal removability.

The $\wp$--removability and conformal removability of compact sets $K \subseteq \C$ is a much-studied topic. We only give a brief overview of past work here, and refer to some recent papers on the topic by Ntalampekos  \cite{ntalampekos_removability_theorem, ntalampekos_triangle} and the references therein for more details, as well as some short proofs of some of the claims made above.   We also refer to \cite{younsi} for a comprehensive survey of conformal removability\footnote{Note that our definition of ``conformally removable" corresponds to ``CH removable" in \cite{younsi}, and what is referred to there as conformal removability is a related but non-equivalent property.} and related topics.  It can be shown that any set of finite (or $\ss$-finite) $1$-Hausdorff measure is $\wp$--removable for all $p \in [1,\infty]$, and hence also conformally removable. In \cite{js}, Jones and Smirnov provided a sufficient condition for $W^{1,p}$--removability in the case where $K$ is the boundary of a simply connected domain, which is then applied to show that boundaries of H\"older domains are $W^{1,2}$--removable. The removability of random sets has also received attention.  For example, as explained in \cite{she2016zipper} the conformal removability of the $\SLE_\kappa$ curves for $\kappa \in (0,4)$ follows by combining the results of \cite{js} and \cite{rs2005basic} as the latter gives that the complementary components of such an $\SLE$ curve are H\"older domains.   Moreover, it was recently shown in \cite{ksle4, ksle_nonsimple} that the range of an $\text{SLE}_4$ curve is a.s.\ conformally removable, as is the range of $\text{SLE}_\kk$ for all $\kappa \in (4,8)$ such that the graph of complementary components of such a curve is connected.  The set of $\kappa \in (4,8)$ for which this property holds was shown to contain $(4,\kappa_0)$ for some $\kappa_0 > 4$ in \cite{gp2020connected}.  The results on the conformal removability of $\SLE_\kappa$ are motivated by its representation as a random conformal welding \cite{she2016zipper, dms2021mating}.

Any set $K$ with non-empty interior can be seen to be conformally non-removable and $\wp$ non-removable for any $p \in [1,\infty]$. More generally, any $K$ with non-zero area is neither conformally removable nor $\wp$--removable for any $p < \infty$, but this fails to hold for $p = \infty$ \cite{ntalampekos_triangle}. Letting $\cC$ denote the standard $1/3$--Cantor set, the product set $\cC \times [0,1]$ is compact, has zero area and is conformally non-removable and $\wp$ non-removable for all $p \in [1,\infty]$.  The starting point for the construction of the exceptional function in this case is the ``Devil's staircase'' function, which is a non-decreasing continuous function mapping $\cC$ to a set of positive Lebesgue measure.  The Sierpi\'nski carpet contains a copy of $\cC \times [0,1]$ so the same conclusions hold. The question of the removability properties of the Sierpi\'nski gasket is much more difficult, and was answered recently by Ntalampekos in \cite{ntalampekos_removability_theorem, ntalampekos_triangle}, who showed that it is not conformally removable, and therefore not $W^{1,p}$--removable for $p \leq 2$, but is $\wp$--removable for all $p \in (2, \infty]$.

\subsection{H\"older continuity and main results}
Recall that a function $g \colon [0,1] \to \R$ is said to be $\aal$-H\"older continuous with exponent $\aal \in (0,1]$ if there exists $C > 0$ such that $\n{g(x) - g(y)} \leq C\n{x - y}^\aal$ for all $x, y \in [0,1]$. The removability of graphs of functions has previously been investigated by Kaufman \cite{kaufman} and Tecu \cite{tecu}. Both constructed examples of graphs of $\aal$-H\"older functions for $\aal < 1/2$ that are neither $\woi$--removable nor conformally removable.  Tecu showed that if $g$ is $\aal$-H\"older continuous for $\aal > p/(2p-1)$ for $1 \leq p < \infty$, or if $\aal > 1/2$ for $p = \infty$, then its graph is $W^{1,p}$--removable, and conversely, if $\aal < p/(2p - 1)$ then there exist $\aal$-H\"older functions whose graphs are not $\wp$--removable. Setting $p=2$, we see that if $\aal > 2/3$, then the graph of any $\aal$-H\"older function must be conformally removable. For $1/2 \leq \aal \leq 2/3$, it remains an open question to answer whether there exist graphs of $\aal$-H\"older functions that are not conformally removable. Whether there exist $\aal$-H\"older graphs which are not $W^{1,p}$--removable for $\aal = p/(2p - 1)$ is also not known. It is a well-known result (see for example \cite{mp2010}) that a one-dimensional Brownian motion $B$ (viewed as a random function) is a.s.\ H\"older continuous on compact time intervals for all $\aal < 1/2$, but not for $\aal = 1/2$. 

\begin{theorem}
\label{thm:non_removable}
Let $B$ be a standard one-dimensional Brownian motion and let $\GG \subseteq \C$ be the graph of $B|_{[0,1]}$.   Then a.s.\ $\GG$ is not $W^{1,p}$--removable for all $p \in [1,\infty)$.
\end{theorem}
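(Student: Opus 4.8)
The plan is to exhibit, a.s., an \emph{exceptional function}: a bounded domain $D\supseteq\GG$ and a continuous $f\colon D\to\R$ with $f\in\wp(D\setminus\GG)$ but $f\notin\wp(D)$. It would suffice to do this for each fixed $p$ separately, but in fact I would aim for a single $f$ that works for all $p\in[1,\infty)$ at once — a continuous $f$ whose weak gradient on $D\setminus\GG$ lies in $L^p$ for every finite $p$, yet whose distributional gradient on $D$ carries a nonzero \emph{defect} (a distribution supported on $\GG$ that is not represented by any $L^p$ function). Such an $f$ must be manufactured with care: $\GG$ is the graph of a continuous function, so it is an arc and does not disconnect the plane; hence $f$ cannot be locally constant off $\GG$, and moreover the ``obvious'' functions built from $B$ — such as $f(x,y)=\int_0^{x\wedge 1}\mathbf{1}[B_s<y]\,ds$ — turn out to lie in $\woi(D)$, consistently with the companion fact that $\GG$ \emph{is} $\woi$--removable, so they are not exceptional. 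The defect has to be produced while $f$ varies slowly everywhere, as a sort of ``Devil's staircase across $\GG$''.

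I would build $f$ by a dyadic multiscale construction, in the spirit of the exceptional functions of Kaufman \cite{kaufman} and Tecu \cite{tecu} for graphs of H\"older functions. Write $I_{n,k}=[k2^{-n},(k+1)2^{-n}]$ and $\mathrm{osc}_{n,k}=\max_{I_{n,k}}B-\min_{I_{n,k}}B$. The only feature of $B$ used is that typically $\mathrm{osc}_{n,k}\asymp 2^{-n/2}$, far larger than the width $|I_{n,k}|=2^{-n}$: the smallest box containing $\GG$ over $I_{n,k}$ is ``tall and thin'', of aspect ratio $\asymp 2^{n/2}$. The function would be obtained as $f=\lim_N f_N$, where $f_N$ agrees with $f_{N-1}$ outside a $2^{-N}$--neighborhood of $\GG$ and, inside a collection of ``good'' scale-$N$ boxes, pushes the function across the part of $\GG$ inside the box by a small amount; the pushes are arranged to telescope along a hierarchy of nested good boxes, so that the net push across $\GG$ down the hierarchy is a fixed positive quantity while the cost at scale $N$ — the $L^p$ norm of $\nabla(f_N-f_{N-1})$ — stays under control. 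The tall-thin geometry is exactly what makes this cost summable over $N$ for every $p<\infty$ but not for $p=\infty$, which is where the hypothesis $p<\infty$ enters and is consistent with $\GG$ being $\woi$--removable.

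The probabilistic input is threefold. First, a L\'evy modulus-of-continuity bound: a.s.\ $\mathrm{osc}_{n,k}\le C\sqrt n\,2^{-n/2}$ for all large $n$ and all $k$, which bounds the box sizes and gives continuity of $f$. Second, a positive-density lower bound: a.s., for all large $n$, at least a fixed fraction of the $I_{n,k}$ are ``good'', meaning $\mathrm{osc}_{n,k}\ge c_0 2^{-n/2}$ and, moreover, $B$ performs over $I_{n,k}$ a definite up-then-down (or down-then-up) excursion, so that $\GG$ over $I_{n,k}$ genuinely separates a tall-thin sub-box into ``above'' and ``below'' — what makes it possible to push \emph{across} $\GG$. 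By Brownian scaling the events ``$I_{n,k}$ is good'' are, up to fixed constants, independent over $k$ for each $n$, so this follows from a second-moment / Borel--Cantelli argument. Third, a hierarchical statement: a.s.\ there is a Cantor-like tree of nested good intervals — a good $I_{n,k}$ containing a good $I_{n+1,\cdot}$, containing a good $I_{n+2,\cdot}$, and so on — with enough branching that the push mass carried down the tree is bounded below; via scaling this reduces to survival of a supercritical branching process. Given these, I would define $f$ by the multiscale pushing along the good tree and then check: continuity of $f=\lim f_N$ (from the first input and the telescoping), $\nabla f\in L^p(D\setminus\GG)$ for all $p<\infty$ (from summing the per-scale energy estimates), and nonzero defect, hence $f\notin\wp(D)$ (from a Devil's-staircase lower bound along the tree: the distributional $\partial_x f$, whose slice at fixed height $y$ lives on the level set $\{x:B_x=y\}$, acquires an extra non-atomic component of positive total mass, singular with respect to Lebesgue measure).

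I expect the main obstacle to be carrying out the multiscale pushing so that all three requirements hold simultaneously with explicit constants while respecting the randomness of $B$: since $\mathrm{osc}_{n,k}$ is occasionally much larger than $2^{-n/2}$ (law of the iterated logarithm) and the good intervals are scattered, one must restrict to the random subcollection of good, hierarchically nested boxes from the second and third inputs and show it still carries enough mass for a nonzero defect, yet involves few enough and thin enough boxes to keep the $L^p$ energy finite — precisely the delicate $p<\infty$ versus $p=\infty$ balance. A secondary technical point is the geometric bookkeeping: ensuring that the part of $\GG$ over a good scale-$(n+1)$ interval lies inside the sub-box assigned at scale $n$, so that successive pushes compose correctly; this should be read off from the excursion structure in the second input.
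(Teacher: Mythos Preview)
Your overall architecture matches the paper's: a hierarchy of nested ``good'' boxes obtained via a branching-process argument, along which one builds a Devil's-staircase function $u$ with $\partial_x u=0$ a.e.\ off $\GG$ but a nonzero jump across $\GG$, and then verifies $\partial_y u\in L^p$ by a scale-by-scale energy estimate. The crucial implementation difference is that the paper does \emph{not} use dyadic time intervals. Instead it introduces stopping times $T_{j+1}^n=\inf\{t>T_j^n:|B_t-B_{T_j^n}|=K^{-n}\}$ and takes boxes to be rectangles over $[T_j^n,T_{j+1}^n]$ of \emph{fixed height} $K^{-n}$ and random width. This buys two things your dyadic scheme lacks. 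First, conditional on the endpoint values, the restriction of $B$ to each box is an exact rescaled copy of the conditioned process, so goodness of a box is genuinely independent of goodness of any other box at the same or earlier scale---the branching process is exact, and the survival argument (Lemma~\ref{lem:infgood}) is clean. With dyadic $I_{n,k}$, the events ``$I_{n,k}$ good'' and ``$I_{n+1,2k}$ good'' are correlated, and your reduction to supercritical branching is not justified as stated. Second, the height ratio between consecutive scales is the large integer $K$, so a good scale-$n$ box contains $\asymp K$ \emph{rows} of scale-$(n+1)$ children at distinct heights; a partition of unity $\phi^R$ in $y$ then spreads the push mass vertically and makes the telescoping identity $\int A_n=\int A_{n+1}$ exact on every horizontal line. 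With dyadic intervals the height ratio between consecutive scales is only about $\sqrt2$, so the ``geometric bookkeeping'' you flag as secondary is in fact the main obstruction: there is no room for multiple rows, and the nesting of sub-boxes becomes delicate.

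Two further points. The paper's exceptional function depends on $p$ through the requirement $K>s^{-p}$ (where $s=c/2$ is the per-row branching parameter), and the a.s.\ statement is obtained afterward by tiling $[0,1]$ into many independent rescaled copies. Your aim of a single $f$ for all finite $p$ would need the per-scale gradient to grow subexponentially while the support shrinks exponentially; in the paper's estimate $\int|\nabla u|^p\lesssim\sum_n(s^{-p}K^{-1})^n$ this fails once $p$ is large, and nothing in your sketch circumvents it---fix $p$ and build $f=f_p$. Finally, your goodness condition (``$\mathrm{osc}\ge c_0 2^{-n/2}$ plus one up--down excursion'') is too weak: the paper requires at least $cK$ upcrossings of \emph{every} sub-interval at the next scale, which is what guarantees $\ge sK$ children at \emph{every} height and is precisely what makes the vertical partition of unity telescope.
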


The proof of Theorem~\ref{thm:non_removable} is based on a ``stochastic version" of Tecu's construction \cite{tecu} to show the existence of graphs $\GG$ of $\aal$-H\"older continuous functions that are not $W^{1,p}$--removable, for $\aal < {p}/(2p - 1)$. In that case, the graph $\GG$ and the function $u \in W^{1,p}(D\setminus \GG) \setminus W^{1,p}(D)$ are constructed simultaneously using a version of the ``Devil's staircase'' function used to prove the non-removability of $\cC \times [0,1]$ and the difficulty arises in trying to ensure that $\GG$ is $\aal$-H\"older while keeping $u$ in $W^{1,p}$ off the graph.   In the setting of Theorem~\ref{thm:non_removable}, we are given the graph $\GG$ (that is a.s.\  H\"older continuous for all $\aal < 1/2$, but not for $\aal = 1/2$) and have to construct $u$ around it, which introduces some challenges.  We remark that if $W^{1,2}$--removability and conformal removability are shown to be equivalent, as has been conjectured, Theorem~\ref{thm:non_removable} will also guarantee that $\GG$ is a.s.\ not conformally removable.  Determining whether or not $\GG$ is conformally removable is currently an open question.  We remark that the behavior of $\GG$ under quasisymmetric mappings has been recently investigated in \cite{bhw2023browniangraph}.

\begin{theorem}
\label{thm:bdd}
Let $B$ be a standard one-dimensional Brownian motion and let $\GG \subseteq \C$ be the graph of $B|_{[0,1]}$.   Then a.s.\ $\GG$ is $W^{1,\infty}$--removable.
\end{theorem}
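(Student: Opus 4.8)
The plan is to show that any continuous $f \colon D \to \R$ with $f \in W^{1,\infty}(D \setminus \GG)$ extends to $W^{1,\infty}(D)$, where we may take $D$ to be a bounded neighborhood of $\GG$. Being in $W^{1,\infty}$ off $\GG$ means $f$ is Lipschitz on each component of $D \setminus \GG$ with a uniform constant $L$, and by continuity $f$ is then Lipschitz on $D \setminus \GG$ with constant $L$ (a path between two points in different components can be pushed to within $\ee$ of $\GG$ along each side). Since $\GG$ has empty interior and $D \setminus \GG$ is dense, continuity of $f$ upgrades this to: $f$ is $L$-Lipschitz on all of $D$, hence $f \in W^{1,\infty}(D)$ with $\nn{\nabla f}_\infty \le L$. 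So the real content is the claim that a continuous function which is $L$-Lipschitz on each complementary component of $\GG$ is globally $L$-Lipschitz — equivalently, that $\GG$ does not ``hide'' any distance, i.e.\ for any two points $z, w \in D \setminus \GG$ there are paths from $z$ to $w$ staying arbitrarily close to $D \setminus \GG$ whose length is close to $|z - w|$; more precisely, one needs that points on opposite sides of $\GG$ can be joined by a path of length close to $|z-w|$ that crosses $\GG$ in a controlled way.

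The key geometric input is a \emph{quantitative non-fattening / porosity} statement for the Brownian graph: I would prove that a.s., for every $\ee > 0$, the $\ee$-neighborhood $\GG_\ee$ of $\GG$ has the property that its complement within $D$ is connected "at scale $\ee$" with small overhead, or more directly, that a.s.\ the graph $\GG$ admits, for Lebesgue-a.e.\ horizontal line $\{y = c\}$, only countably many crossings, and one can slalom across $\GG$ cheaply. The cleanest route: fix $z = (x_1, y_1)$ and $w = (x_2, y_2)$ in $D \setminus \GG$ and assume WLOG $x_1 < x_2$. Consider the straight segment $\sigma$ from $z$ to $w$. The set of times $t \in [x_1, x_2]$ where $(t, B_t)$ lies within $\delta$ of $\sigma$ is closed; I want to replace $\sigma$ near those times by short detours that go around $\GG$. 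Here is where one uses that $B$ has a local time / that the occupation measure of $B$ near any level is a.s.\ not too concentrated: more usefully, use that $\GG$ has Hausdorff dimension $3/2$ and in particular zero area, and that $B$ is a.s.\ nowhere differentiable with both upper and lower derivatives $\pm \infty$ at every point, so near every point of $\GG$ there are points of $D \setminus \GG$ immediately above and below. The nowhere-differentiability gives, at every $t$, points $(s, B_s)$ with $B_s - B_t$ of arbitrarily large slope relative to $s - t$ on both sides, which lets one certify that small vertical detours escape $\GG$; quantitatively one wants: for every $\eta>0$, a.s.\ for all sufficiently small $\delta$, any point within $\delta$ of $\GG$ can be connected to a point at vertical distance $\delta^{1-\eta}$ (say) from $\GG$ by a vertical segment of length $\le \delta^{1-\eta}$ that does not recross $\GG$. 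Summing the detour costs against the (finite, a.s.) number of $\delta$-separated excursions of $\sigma$ near $\GG$ — controlled because $\GG$ has finite ``box-counting at the right rate'' along a generic segment — gives total detour length $\to 0$ as $\delta \to 0$.

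I would organize the write-up as: (i) reduce $W^{1,\infty}$-removability of a compact set to the Lipschitz-gluing statement above (this is essentially standard, cf.\ \cite{ntalampekos_triangle}, and does not use randomness); (ii) state the geometric lemma: a.s., $\GG$ is such that any two points of $D \setminus \GG$ are joined by paths in $D \setminus \GG$ of length $\le |z-w| + o(1)$; (iii) prove the lemma using the modulus-of-continuity upper bound $|B_t - B_s| \le C|t-s|^{1/2}\sqrt{\log(1/|t-s|)}$ (Lévy) together with the fact that along a fixed segment the graph is "thin" — specifically, for a segment $\sigma$ of slope $m$, the set $\{t : \dist((t,B_t),\sigma) < \delta\}$ is contained in finitely many intervals whose count grows at most like $\delta^{-1/2-o(1)}$, each of length $\le \delta^{1+o(1)}$, so that detouring around each contributes $\le \delta^{1+o(1)}$ in length for a total of $\delta^{1/2 - o(1)} \to 0$; and (iv) conclude.

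The main obstacle, I expect, is step (iii): making rigorous that the detours can be chosen to \emph{not recross} $\GG$ and to have summable total length. The naive concern is that $\GG$ could oscillate near the segment $\sigma$ at a fine scale so that there is no clean "side" to detour to; this is exactly where the precise modulus of continuity of $B$ (both the Lévy upper bound and non-differentiability lower bound) must be invoked, and where one must be careful that the $o(1)$ exponents genuinely beat the count of near-crossings. An alternative, perhaps more robust, approach to (iii) is to avoid explicit detours and instead use a capacitary / extremal-length criterion for $W^{1,\infty}$-removability (porosity-type: a porous set of box dimension $< 2$ is $W^{1,\infty}$-removable), checking that $\GG$ is a.s.\ porous — which follows from $B$ having, at every scale and every point, a definite chance of a large fluctuation, giving a macroscopic gap in $\GG_\delta$ — but porosity alone may not suffice and one likely needs the dimension bound $\dim_{\mathrm{box}}\GG = 3/2 < 2$ as well; I would pursue whichever of these the preceding sections have set up machinery for.
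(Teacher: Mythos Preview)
Your overall architecture matches the paper's: reduce $W^{1,\infty}$--removability to a path-finding statement (your step (i) is the paper's Lemma~\ref{lem:removable}), and then construct short paths that cross $\GG$ only finitely often (your steps (ii)--(iii) correspond to Lemma~\ref{lem:zero_cover} and Corollary~\ref{cor:short_paths}). The gap is entirely in step (iii), and it is a real one, not a technicality.

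Your exponent count does not close. Take $\sigma$ horizontal at a level $y$ hit by $B$ and set $\cZ = \{t : B_t = y\}$. Covering $\cZ$ at scale $\delta$ by box-counting gives $\asymp \delta^{-1/2}$ intervals of length $\delta$. But the cost of detouring over the graph on an interval $U$ is the \emph{range} of $B$ on $U$, which is $\asymp |U|^{1/2}$, not $|U|$; so the total detour is $\asymp \delta^{-1/2}\cdot \delta^{1/2} = O(1)$, not $\delta^{1/2-o(1)}$. (Equivalently: if instead you use the components of $\{t:|B_t-y|<\delta\}$, there are $\asymp \delta^{-1}$ of them, each with vertical detour $\le 2\delta$, again giving $O(1)$.) Brownian motion sits exactly at the critical H\"older exponent $1/2$, so the naive Lévy-modulus-plus-box-dimension bookkeeping stalls at a bounded, non-vanishing total; the $\sqrt{\log}$ in Lévy's modulus makes it worse, not better. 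Your alternative route via porosity or box dimension $<2$ also fails outright: $\cC\times[0,1]$ (with $\cC$ the middle-thirds Cantor set) is porous and has box dimension $1+\log 2/\log 3<2$, yet is not $W^{1,\infty}$--removable.

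What the paper does to get past the critical exponent is twofold. First, it uses not $\dim_\cH\cZ = \tfrac12$ but the strictly finer fact $\cH^{1/2}(\cZ)=0$ a.s., which yields covers $\{U_j\}$ of $\cZ$ with $\sum_j |U_j|^{1/2}$ \emph{arbitrarily small}; this is exactly the slack you are missing. Second, since $B$ is not $\tfrac12$--H\"older, one cannot bound the range $M_j$ over $U_j$ deterministically by $c|U_j|^{1/2}$; instead the paper conditions on $\cZ$ and uses tail bounds for the maximum of a Brownian excursion (via the Chung--Kennedy identity with a Brownian bridge) to show that $\E[M_j\mid\cZ]\lesssim |U_j|^{1/2}$ and $\mathrm{Var}[M_j\mid\cZ]\lesssim |U_j|$, from which Chebyshev gives $\sum_j M_j \to 0$ along the chosen covers with high probability. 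A Fubini argument then upgrades this from a fixed level $y$ to a.e.\ $y$. Your sketch contains neither of these two ingredients, and without them the detour sum does not go to zero.
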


The argument used to prove Theorem~\ref{thm:bdd} is inspired by the proof of \cite[Theorem 6]{tecu}. The method used in \cite{tecu} (in the $\aal < 1/2$ case) to construct graphs which are not conformally removable immediately shows that these graphs are also not $W^{1,\infty}$--removable. Therefore, Theorem~\ref{thm:bdd} demonstrates that this method cannot be used to show that $\GG$ is not conformally removable without significant alteration.

\subsection*{Acknowledgements} C.D.\ was supported by EPSRC grant EP/W524633/1 and a studentship from Peterhouse, Cambridge.  J.M.\ was supported by ERC starting grant 804116 (SPRS).

\subsection*{Outline.} We will prove Theorem~\ref{thm:non_removable} in Section~\ref{sec:non_removability} and Theorem~\ref{thm:bdd} in Section~\ref{sec:bdd}.

\section{\texorpdfstring{Non-removability for $p < \infty$}{Non-removability}}
\label{sec:non_removability}

\subsection{Absolute continuity on lines}
The property of absolute continuity on lines is often central to questions on removability.  Given a domain $D \subseteq \C$, a function $f \colon D \to \R$ is said to be \textit{absolutely continuous on lines} (ACL) if the restriction of $f$ to (Lebesgue) a.e.\ horizontal and vertical line is absolutely continuous (here, $f$ is absolutely continuous on a line $\ell$ if it is absolutely continuous on any line segment of $\ell$ that lies entirely in $D$).

The following properties of absolute continuity on lines can be found, for example, in \cite[\S1.1.3]{mazya}. If a continuous function $f \colon D \to \R$ is in $W^{1,p}(D)$ for any $p \geq 1$ then it is ACL in $D$. Conversely, suppose $f$ is ACL in $D$.  Then on (Lebesgue) a.e.\ line parallel to the coordinate axes, $f$ is absolutely continuous and therefore has partial derivatives $\del_x f, \del_y f$ a.e. Moreover, if $f, \del_x f, \del_y f \in L^{p}(D)$, then $f \in W^{1,p}(D)$, and the distributional and classical derivatives of $f$ are a.e.\ equal. The outcome of these results is that if $f \in W^{1,p}(D \setminus K)$, $f$ is ACL on all of $D$
and $K$ has zero area (which holds if $K$ is a graph), then $f$ must be in $W^{1,p}(D)$.

\subsection{Proof of Theorem~\ref{thm:non_removable}}
In this section we will prove Theorem~\ref{thm:non_removable}, which states that the graph $\GG$ of a one-dimensional Brownian motion on $[0,1]$ is a.s.\ not $W^{1,p}$--removable for any $p \in [1, \infty)$.

The idea of the proof is to construct a nested family of closed sets $\GG_n$ where each $\GG_n$ is a finite union of rectangles in the plane in such a way that $\cap_n \GG_n \subseteq \GG$. Proposition~\ref{prop:r0good} shows that if the collection~$\GG_n$ satisfies a certain property (depending on $p \in [1,\infty)$), then we can construct a continuous function $u$ on a domain $D$ containing $\GG$ such that $u$ is in $\wp(D \setminus \cap_n \GG_n)$ but not in $\wp(D)$, proving that $\cap_n \GG_n$, and hence also $\GG$, is not $\wp$--removable. Lemmas~\ref{lem:good} and~\ref{lem:infgood}, combined with the proof of Theorem~\ref{thm:non_removable} at the end of this section, show that for any given $p \in [1,\infty)$, it is a.s.\ possible to choose the collection $\GG_n$ such that the property we have alluded to holds, from which we conclude that $\GG$ is a.s.\ not $\wp$--removable for any $p \in [1,\infty)$.

To state our lemmas, we redefine $B$ slightly to be a one-dimensional Brownian motion started from~$0$, stopped when it hits $1$, and conditioned to hit $1$ before $-1$, and we let $\GG$ be its graph.  We will show why this implies the theorem at the end of this section. For $K \in \N$ large and for each $n \in \N_0$ we define stopping times $T_0^n = 0$ and
\begin{equation}
\label{eq:tdef}
    T_{j+1}^n = \inf\left\{t > T_j^n \colon \n{B(t) - B(T^n_j)} = K^{-n}\right\} \quad\text{for}\quad j \geq 0.
\end{equation}

Let $\tau_1 = \inf\{t > 0 \colon B_t = 1\}$ and note that $\tau_1 = T_1^0$. To avoid pathologies, we assume that $\tau_1 < \infty$ which holds a.s. Suppose that $j, n \geq 0$ are such that $T_{j+1}^n \leq \tau_1$ and that there exists $\ell \in \Z$ with $B(T^n_j) = \ell K^{-n}$ and $B(T^n_{j+1}) = (\ell + 1)K^{-n}$ (note that we require that $B(T_{j+1}^n) - B(T_j^n)$ is \emph{positive}).
We define a \textit{box at scale $n$} to be a (closed) rectangle~$R$ with boundaries $x = T^n_j, T^n_{j+1}$ and $y = B(T^n_j), B(T^n_{j+1})$.  Conditional on $B(T^n_j)$ and $B(T^n_{j+1})$ (but not the stopping times $T^n_j, T^n_{j+1}$), the process $(B_{t} \colon T^n_j \leq t \leq  T^n_{j+1})$ will have the law of a Brownian motion started at $\ell K^{-n}$ that is conditioned to hit $(\ell + 1)K^{-n}$ before $(\ell - 1)K^{-n}$ and is stopped once this occurs. Note that this is simply a translation and rescaling of a standard Brownian motion started at 0 and conditioned to hit 1 before $-1$. The top and bottom boundaries of a box at scale~$n$ take values in a discrete set (multiples of $K^{-n}$) and we can partition the set of all such boxes into those lying at different heights. See Figure~\ref{fig:sim_boxes} for a depiction.

\begin{figure}[t]
\includegraphics[scale=0.8]{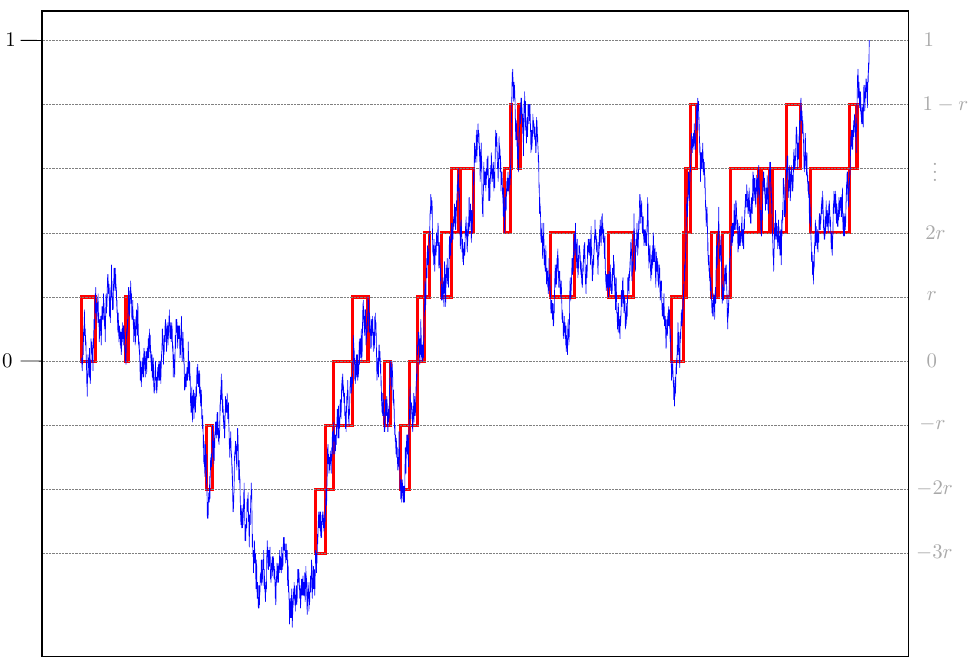}
\centering
\caption{A Brownian motion $B$ started from $0$ and conditioned to hit $1$ before $-1$. Also shown (in red) are the boxes at scale 1. Here we have set $K = 5$ and $r = \tfrac{1}{5}$. As in Definition~\ref{def:good}, $\cM = \{-3rK, -3rK + 1, \dots, K - rK - 1\}$, so the horizontal boundaries lie at heights $\{-3r, -3r + 1/K, -3r + 2/K, \ldots, 0, 1/K, \ldots, 1 - r\}$. The fact that $r = 1/K$ is not true in general, so we would usually have, for example, more possible box heights between $-3r$ and $-2r$. The law of $B$ inside any red box (if we do not condition on its width) is equal to a translation and scaling of the law of $B$.} 
\label{fig:sim_boxes}
\end{figure}

\begin{figure}[t]
\includegraphics[scale=1]{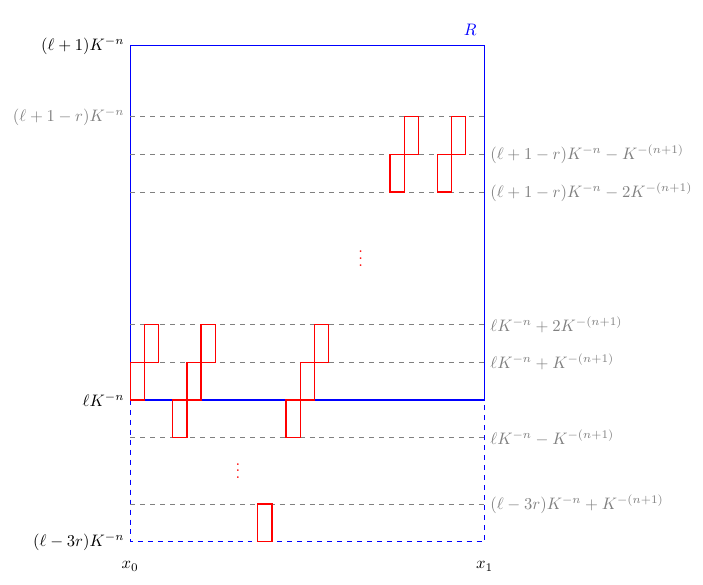}
\centering
\caption{The blue box $R$ (with solid boundary) is a box at scale $n$, with lower boundary at height $\ell K^{-n}$. The blue rectangle with dashed boundary shows the extension of $R$ to the space just below it, which we sometimes consider as part of~$R$. Each red box is a child of $R$, and is a box at scale $n+1$. $R$ is good if it has at least $cK$ children at each height (as indexed by $\cM$). Note that we have drawn this figure so that the small boxes all have the same width; this is merely for graphical purposes. In reality, each box will be stretched/compressed horizontally.
Note that we have not drawn the extensions of these boxes. If we do include these extensions, the red boxes may overlap vertically, but their interiors will not intersect. In particular, a horizontal line (as in the proof of Proposition~\ref{prop:r0good}) may pass through boxes at two different heights. (This figure is not drawn to scale.)}
\label{fig:blocks_within_block}
\end{figure}

\begin{definition}
\label{def:good}
Let $r,c > 0$ be small and chosen such that $rK \in \N$. We say that a box $R$ (as described above) is \emph{$(r,c)$-good}, or simply \emph{good}, if for each $m \in \cM \coloneqq \{-3rK, -3rK + 1, \dots, K - rK - 1\}$, the restriction of $B$ to $[T^n_j, T^n_{j+1}]$ makes at least $cK$ upcrossings of each of the intervals $[\ell K^{-n} + m K^{-(n+1)}, \ell K^{-n} + (m+1) K^{-(n+1)}]$. See Figure~\ref{fig:blocks_within_block} for further explanation.
\end{definition}

We assume $rK \in \N$ to avoid technicalities, since otherwise the set $\cM$ would not be well-defined as the sequence $-3rK, -3rK + 1, \dots$ may not include $K - rK - 1$. We will always be able to do so by choosing $r \in \Q$ and by making $K$ bigger, if necessary. In this case, at each of these $K + 2rK$ heights determined by $m \in \cM$, we will have at least $cK$ boxes of scale $n+1$ that are ``contained" in~$R$ (although strictly they can be slightly below the lower boundary of~$R$). We refer to such boxes as \textit{children} of $R$. See Figure~\ref{fig:blocks_within_block} for a visualization.

By Brownian scaling and the translation invariance of Brownian motion, the probability that a given box is good is a constant depending only on $K$, $c$, and $r$, and does not depend on $n$, $j$, or $\ell$ (that is, the location or scale of the box). Furthermore, whether a fixed box is good is independent of whether any other box of the same scale, or any earlier scale, is good. Due to these considerations, the probability of the event described in the following lemma is the probability that any such box is $(r,c)$-good. It is worth mentioning that we are going to allow $K$ to change, so a box (at scale $n$) could perhaps be referred to as a ``$K$-box (at scale $n$)" since the definition of a box depends on $K$. We will not do this here however, since $K$ should always be clear from the context, but it is worth keeping this in mind in the following lemmas.

\begin{lemma}
\label{lem:good}
For every $q_0 \in (0,1)$ there exist $r, c> 0$ and $K_0 = K_0(c) \in \N$ such that, for all $K \geq K_0$ (with $rK \in \N$), the probability $q \equiv q_{r,c,K}$ that a Brownian motion $B$ started from $0$ and conditioned to hit $1$ before $-1$ has at least $cK$ upcrossings of every interval of the form $[mK^{-1}, (m+1)K\nv]$ for $m \in \cM$ is at least $q_0$.
\end{lemma}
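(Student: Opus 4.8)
The plan is to transfer the statement to the simple random walk embedded in $B$ at scale $K^{-1}$, and then to study the profile of its edge-crossing numbers via the discrete first Ray--Knight theorem and its diffusion scaling limit. Write $\delta=K^{-1}$ and let $0=T_0<T_1<\cdots$ be the stopping times at which $B$ has moved by $\delta$ since the previous one (the $T^1_j$ of \eqref{eq:tdef}). Then $W_j:=K\,B(T_j)$ is a simple random walk on $\Z$ from $0$, and, since $B$ is conditioned to hit $1$ before $-1$ and is stopped at $\tau_1$, the walk $W$ is a simple random walk from $0$ conditioned to hit $K$ before $-K$, stopped at $T:=\inf\{j:W_j=K\}$. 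For $m\in\cM$, every step of $W$ from $m$ to $m+1$ taken before time $T$ is carried out by a piece of the path of $B$ running from $m\delta$ to $(m+1)\delta$, and therefore contains at least one upcrossing of $[m\delta,(m+1)\delta]$ by $B$; since these pieces occupy disjoint time intervals, the number of upcrossings of $[m\delta,(m+1)\delta]$ by $B$ before $\tau_1$ is at least $N_m:=\#\{0\le j<T:\ W_j=m,\ W_{j+1}=m+1\}$. It therefore suffices to find $r,c>0$ and $K_0$ with $\pr{N_m\ge cK\ \text{for every}\ m\in\cM}\ge q_0$ for all $K\ge K_0$ with $rK\in\N$.

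To analyze $(N_m)_m$ I would first drop the conditioning: let $\wt W$ be a simple random walk from $0$ stopped at $\wt T:=\inf\{j:\wt W_j=K\}$ and, for $m<K$, let $\wt N_m$ be the number of up-steps of $\wt W$ across the edge $(m,m+1)$ before $\wt T$. By the discrete first Ray--Knight theorem, the reversed profile $(\wt N_{K-1},\wt N_{K-2},\dots)$ is a Markov chain with $\wt N_{K-1}=1$ whose increments (as the level decreases) have conditional mean $+1$ on the levels $m\ge0$ — so that $\mathbf{E}[\wt N_m]=K-m$ there — and which on the levels $m<0$ is a critical Galton--Watson process with a fixed mean-one offspring law, absorbed at $0$. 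Moreover $\{\wt W\ \text{hits}\ K\ \text{before}\ -K\}=\{\wt N_{-K}=0\}$, an event of probability $\tfrac12$, and conditioning on it recovers the law of $(N_m)_m$. Rescaling levels and values by $K$, the standard scaling limits for these Ray--Knight chains give that $t\mapsto\wt N_{K-\lfloor Kt\rfloor}/K$ converges in law to a diffusion $\cX$ which behaves as (a constant multiple of) a squared Bessel process of dimension $2$ from $0$ on $[0,1]$ and as (a constant multiple of) a squared Bessel process of dimension $0$, absorbed at $0$, for $t\ge1$; in particular $\cX$ is a.s.\ continuous with $\cX_0=0$, is strictly positive on $(0,t_0)$ for its absorption time $t_0$, and satisfies $t_0>1$ a.s.\ (since the dimension-$2$ part a.s.\ does not return to $0$ after time $0$). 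The conditioning $\{\wt N_{-K}=0\}$ becomes $\{\cX_2=0\}=\{t_0\le2\}$ in the limit.

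Finally, as $m$ runs over $\cM=\{-3rK,\dots,K-rK-1\}$ the parameter $t=(K-m)/K$ runs over a subset of $[r,1+3r]$, so, under the conditioning, $\{N_m\ge cK\ \text{for all}\ m\in\cM\}\supseteq\{\inf_{t\in[r,1+3r]}\wt N_{K-\lfloor Kt\rfloor}/K\ge c\}$. Since $t_0>1$ a.s., we have $\pr{t_0>1+3r\mid t_0\le2}\to1$ as $r\to0$, so I would fix a rational $r\in(0,\tfrac13)$ with $\pr{t_0>1+3r\mid t_0\le2}>\tfrac12(1+q_0)$; on the event $\{t_0>1+3r\}$ the continuous process $\cX$ is strictly positive throughout $[r,1+3r]$, hence $\inf_{[r,1+3r]}\cX>0$ there, and I would then fix $c>0$ with $\pr{\inf_{t\in[r,1+3r]}\cX_t\ge2c\mid t_0\le2}>q_0$. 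Because $\cX$ is continuous, $x\mapsto\inf_{[r,1+3r]}x$ is a continuous functional of the path, so the convergence in law above, together with the convergence of the conditioning probabilities (the event $\{t_0\le2\}$ being a continuity event), yields $\pr{\inf_{t\in[r,1+3r]}\wt N_{K-\lfloor Kt\rfloor}/K\ge c\mid \wt N_{-K}=0}\ge q_0$ for all $K\ge K_0$, i.e.\ $q_{r,c,K}\ge q_0$. The main obstacle is exactly this last step: setting up the Ray--Knight crossing-number Markov chain together with its functional scaling limit \emph{under} the conditioning $\{\wt N_{-K}=0\}$, and checking continuity of the infimum functional on the limit. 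Note that a direct union bound over the $\Theta(K)$ levels in $\cM$ cannot work, since each $\wt N_m/K$ has a non-degenerate limit law whose support reaches $0$, so $\pr{\wt N_m<cK}$ is only of order $c$ rather than $o(1/K)$; it is the joint, continuous behavior of the whole profile that makes the bound possible.
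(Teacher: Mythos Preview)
Your approach is correct and would yield a valid proof, but it follows a genuinely different route from the paper. The paper works directly with the Brownian motion: after choosing $r$ small so that $W$ hits $-3r$ before $1$ with high probability, it applies the \emph{continuous} first Ray--Knight theorem to show that $\inf_{x\in[-3r,1-r]}L^W_x(\tau_1^W)\ge\delta$ with probability close to $1$. This local-time lower bound gives that $W$ spends time at least $\delta/K$ in each interval $[m/K,(m+1)/K]$. The conversion from ``time spent'' to ``upcrossing count'' is then done by an explicit Chernoff bound on the exit-time distribution $\sigma_1^W$: for $c$ small, the probability that $cK$ crossings of a length-$1/K$ interval together take time exceeding $\delta/K$ is exponentially small in $K$, so a union bound over the $O(K)$ levels suffices.

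Your argument instead passes to the embedded simple random walk and studies the edge-crossing numbers $N_m$ directly via the \emph{discrete} Ray--Knight theorem, then invokes the BESQ$(2)$/BESQ$(0)$ scaling limit and a continuous-mapping argument for the infimum, under the conditioning $\{\wt N_{-K}=0\}$. This is conceptually clean---the $N_m$ are exactly the quantities of interest, and their Markov structure is tailor-made for the question---but it requires heavier machinery: the discrete Ray--Knight description, a functional limit theorem for the rescaled profile, and the conditional weak-convergence step you correctly flag as the main technical obstacle. The paper's proof is more elementary in that it only needs the continuous Ray--Knight theorem as a black box plus an exponential-moment computation; in particular it avoids any scaling-limit argument. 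Both proofs ultimately rest on Ray--Knight, but you use it at the level of crossing counts themselves, whereas the paper uses it for local time and converts to crossings by a separate large-deviations estimate. Your final remark---that a naive union bound on $\P(N_m<cK)$ cannot work and that joint control of the profile is essential---applies equally to the paper's argument, where the joint control is supplied by the uniform local-time bound.
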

\begin{proof}
Fix $q_0  \in (0,1)$ and set $\ee = 1 - q_0$. Let $W$ be a standard Brownian motion started from 0 and let $L_x^W(t)$ denote its local time (as defined in \cite{mp2010}) at a point $x \in \R$ at time $t$. In the following, for $a \in \R$, let $\tau_a^W = \inf\{t \geq 0\colon W_t = a\}$. Choose $r > 0$ such that
\begin{equation}\label{eq:hit3r}
    \P[ \tau_1^W < \tau_{-3r}^W ] < \frac{\ee}{6}.
\end{equation}

Let $V$ be a Brownian motion started from $-3r$ with local time $L^V_x(t)$ and set $\tau^V_1 = \inf\{t \geq 0 \colon V_t = 1\}$. By the first Ray--Knight theorem \cite[Theorem 6.28]{mp2010}, 
\[ \P\!\left[ \inf_{x \in [-3r, 1-r]} L_x^V(\tau_1^V) = 0 \right] = 0.\]
This ensures that there exists $\dd > 0$ such that
\begin{equation}
\label{eq:ray_knight}
    \P\!\left[ \inf_{ x \in [-3r, 1 - r]} L^V_x(\tau^V_1) < \dd\right] < \frac\ee6.
\end{equation}
Noting that $(V_t)_{t \geq 0}$ and $(W_{\tau_{-3r}^W + t})_{t\geq 0}$ have the same distribution, we can combine~\eqref{eq:hit3r} and~\eqref{eq:ray_knight} to conclude that
\begin{equation}\label{eq:small_local_time}
     \P\!\left[ \inf_{x \in [-3r, 1 - r]} L^W_x(\tau_1^W) < \dd\right] < \frac\ee3.
\end{equation}

By \cite[Theorem 6.18]{mp2010}, we have for each interval $I \subseteq \R$ that
\[ \int_0^{\tau_1^W}\one_{W_s \in I} ds = \int_I L_x^W(\tau_1^W)dx.\]
It follows that if the event in~\eqref{eq:small_local_time} does not hold (which happens with probability close to $1$) then $W$ spends time at least $\dd/K$ in the interval $[m K\nv, (m + 1)K\nv]$ before $\tau_1^W$ for each $m \in \cM$. We want to show that, for small $c > 0$, the event that $W$ has $cK$ upcrossings of each of these intervals before $\tau_1^W$ has a high probability, and in fact has a high probability uniformly in $K$.


For each $a \geq 0$ define $\ss_a^W = \inf\{t \geq 0\colon \n{W_t} = a\}$. Then there exists $\la > 0$ such that $\psi(\la) = \log \E(e^{\la \ss_1}) < \infty$.  Fix $\la$ and choose $c > 0$ small enough that $4c\psi(\la) < \dd \la$. The time spent in the interval $[0,a]$ during the first upcrossing (and also all subsequent upcrossings) of $[0,a]$ has distribution (see \cite[Theorem 5.38]{mp2010})
\begin{equation}
    \int_0^{\tau_a^W}\one(W_t \in [0,a])dt \:\stackrel{d}{=}\: \ss_a^W.
\end{equation}

Consider now the first $J = 2\lceil{cK}\rceil + 1$ crossings (either up- or downcrossings) of $[0,1/K]$. Then $J \leq 4cK$ for sufficiently large $K$ (for fixed $c$). This set of crossings will include at least $\lceil{cK}\rceil$ upcrossings. We bound the probability that $W$ spends too long in $[0,1/K]$ before enough upcrossings have occurred. The probability that $W$ spends more than time $\dd K$ in this interval by the time $J$ crossings have been completed is equal to the probability that the sum of $J$ independent copies of $\ss_{1/K}^W$ (denote these by $\ss_{1/K}^i$) is greater than $\dd/K$. Let $\ss_1^{i}$ denote i.i.d.\ copies of $\ss_1^W$. We have that
\begin{flalign*}
    & & \P\!\left[\sum_{i=1}^J \ss_{1/K}^{i} > \frac\dd K \right] &= \P\!\left[ \sum_{i=1}^J \ss_1^{i}  > \dd K\right] &\text{(since $\ss_a^W \stackrel{d}{=} a^2 \ss_1^W$ by scaling)}\\
    & & &\leq \E\!\left[\exp\left(\la \sum_{i=1}^J \ss_1^{i}\right)\right] e^{-\la \dd K} &\text{(by a Chernoff bound)}\\
    & & &\leq \exp(\psi(\la))^J e^{-\la \dd K} &\text{(independence)}\\
    & & & \leq \exp[-K(\dd \la - 4c\psi(\la))]. &
\end{flalign*}
By our choice of $c$, the exponent in the last line is negative. Note that this same argument applies to all intervals $I_m = [m K\nv, (m + 1)K\nv]$ for $m \in \cM$. Let $A_K$ be the event that, for all $m \in \cM$, $W$ makes $\lceil{cK}\rceil$ upcrossings of $I_m$ before it spends $\dd/K$ units of time in $I_m$. By a union bound,
\begin{equation}
\label{eq:akc}
    \P[ A_K^c ] \leq (1 + 2r)K\exp(-K(\dd \la - 4c\psi(\la))) \leq \frac{\ee}{6},
\end{equation}
where the last inequality holds for all $K$ greater than or equal to some constant $K_0$ depending on $c$ since the middle expression goes to 0 as $K \to \infty$.

We can now conclude the proof. Note that the event $A_K$ is defined for $W$ even if some of the upcrossings happen after $\tau_1^W$, since we have defined this event without any reference to $\tau_1^W$. Suppose it holds that $L^W_x(\tau_1^W) \geq \dd$ for all $x \in [-3r, 1 -r]$ and also that $A_K$ holds. Then $W$ must make at least $cK$ upcrossings of each interval $I_m$ for $m \in \cM$ before $\tau_1^W$. Call this event $E$. By~\eqref{eq:small_local_time} and~\eqref{eq:akc} we have that
\[ \P[E^c] \leq \P\!\left[  \inf_{x \in [-3r, 1 - r]} L^W_x(\tau^W_1) < \dd\right] + \P[ A_K^c] \leq \frac{\ee}{2}.\]
Recall that $W$ is a standard Brownian motion started from $0$, but we want a corresponding result for $B$, which is a Brownian motion conditioned to hit 1 before $-1$. But we can easily extend our result to this case:
\[ \P[E^c \mid \tau_1^W < \tau_{-1}^W] = \frac{\P[E^c, \tau_1^W < \tau_{-1}^W]}{\P[\tau_1^W < \tau_{-1}^W]} \leq \frac{\P[E^c]}{1/2} \leq \ee.\]
It follows that for some choice of $r$, $c$, and $K_0$, for all $K \geq K_0$, the probability that $B$ makes at least~$cK$ upcrossings of each interval $I_m$ for $m \in \cM$ before hitting $1$ is at least $q_0$.
\end{proof}

Fix $q_0 \in (0,1)$, choose $r,c > 0$ as in Lemma~\ref{lem:good} and define $s = c/2$. Suppose $K$ is sufficiently large that the claim in the lemma holds. We say a box $R$ is $\cG_1$ (or $1$-good) if it is $(r, s)$-good, which is a strictly weaker condition that being $(r, c)$-good. In particular, we emphasize that being \emph{$1$-good} and being \emph{good} are not equivalent. For $n \geq 1$, we say $R$ is $\cG_{n+1}$ or $(n+1)$-good if it has at least $sK$ children at each of the $K + 2rK$ heights that are $n$-good (see Figure~\ref{fig:blocks_within_block}). Say a box is in $\cG_\infty$ if it is in $\cG_n$ for every $n$. Then a box in $\cG_\infty$ will have at least $sK$ children in $\cG_\infty$ at each of the $K + 2rK$ possible heights, thus producing an infinite nested structure of boxes all in $\cG_\infty$. Note that a box being $\cG_{n+1}$ means it must also be $\cG_n$.

We fix the \emph{box at scale 0} to be $R_0 = [0, T^0_1] \times [0,1]$. Recall that we assumed that $T^0_1$ is finite so that this is a well-define box. Recall also that the property of being good (or of being $\cG_\infty$) depends on $r$, $c$, and~$K$, and that a given box is good with probability $q = q_{r,c,K}$. The next lemma shows that there is a non-zero probability that $R_0$ is in $\cG_\infty$ for some $r$, $c$, and $K$. As before, by scale and translation invariance, the probability that a given box is $\cG_\infty$ depends only on $r$, $c$, and $K$, and not its location or scale, and furthermore this event is independent of any other box at the same scale being $\cG_\infty$ (and indeed at earlier scales as long as they are not ancestors of the box in question).

\begin{lemma}
\label{lem:infgood}
There exist $r,c > 0$ and $K_0 = K_0(c) \in \N$ such that for all $K \geq K_0$ (with $rK \in \N$) we have that
\[ \P[ R_0 \in \cG_\infty ] > 0.\]
\end{lemma}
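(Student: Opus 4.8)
The plan is to apply Lemma~\ref{lem:good} with a success probability $q_0$ close to $1$ and then to prove, by induction on the scale, that $p_n \coloneqq \P[R_0 \in \cG_n]$ stays bounded away from $0$ uniformly in $n$. Since the events $\{R_0 \in \cG_n\}$ are decreasing in $n$ and $\{R_0 \in \cG_\infty\} = \bigcap_{n\geq 1}\{R_0 \in \cG_n\}$, this gives $\P[R_0 \in \cG_\infty] = \lim_n p_n > 0$, which is the claim.

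Concretely, I would fix $\ee \in (0,1/4)$ and apply Lemma~\ref{lem:good} with $q_0 = 1 - \ee$ to obtain $r,c > 0$ and $K_0(c) \in \N$ so that, for all $K \geq K_0$ with $rK \in \N$, every box is $(r,c)$-good with probability $q_{r,c,K} \geq 1 - \ee$. Since being $(r,c)$-good is stronger than being $\cG_1$ (that is, $(r,s)$-good, where $s = c/2$), this already gives $p_1 \geq 1 - \ee$. The inductive claim is $p_n \geq 1 - 2\ee$ for every $n \geq 1$. For the inductive step I would condition on the sequence of values $(B(T^1_j))_j$ — equivalently, on the sequence of $\pm K^{-1}$ steps of $B$ on $[0,T^0_1]$ — but not on the stopping times $T^1_j$ themselves. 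This data already determines whether $R_0$ is $(r,c)$-good and determines the children of $R_0$ together with their heights, while, by the strong Markov property of $B$ and the computation recalled in the text (the law of $B$ inside a box, conditioned on the box's endpoints but not its width, is a translation and rescaling of the original conditioned Brownian motion), conditionally on this data the restrictions of $B$ to the time intervals of the distinct children are independent copies of the original process up to translation and scaling. Hence, conditionally, the indicators $\{C \in \cG_n\}$ over the children $C$ of $R_0$ are independent, each equal to $1$ with probability $p_n$. On the event that $R_0$ is $(r,c)$-good there are at least $cK$ children at each of the $K + 2rK$ admissible heights, and $R_0 \in \cG_{n+1}$ holds as soon as, at each such height, at least $sK = cK/2$ of those children lie in $\cG_n$. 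Since $p_n \geq 1 - 2\ee > 1/2$, the conditional number of $\cG_n$-children at a fixed height stochastically dominates a $\Bin(\lceil cK\rceil, p_n)$ variable whose mean exceeds $cK/2$, so a Chernoff (Hoeffding) bound makes it drop below $cK/2$ with conditional probability at most $e^{-c_1 K}$ for some $c_1 = c_1(c,\ee) > 0$; a union bound over the at most $(1+2r)K$ heights bounds the conditional probability of failure by $(1+2r)K\,e^{-c_1 K}$, which is at most $\ee$ as soon as $K$ exceeds a constant depending on $c$, $r$, $\ee$ (enlarge $K_0$ if necessary). Undoing the conditioning yields $p_{n+1} \geq \P[R_0 \text{ is } (r,c)\text{-good}]\,(1 - \ee) \geq (1 - \ee)^2 \geq 1 - 2\ee$, which closes the induction, whence $\P[R_0 \in \cG_\infty] = \lim_n p_n \geq 1 - 2\ee > 0$.

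The step that will need the most care is this conditional independence claim: one must check that ``$R_0$ is $(r,c)$-good'' and the list of children of $R_0$ are measurable with respect to the sequence of heights $(B(T^1_j))_j$, and that conditionally on this sequence (but not the stopping times) the path segments of $B$ over the children's time intervals are genuinely independent copies of the original conditioned Brownian motion after translating and rescaling — so that $\P[C \in \cG_n \mid (B(T^1_j))_j] = p_n$ and these events are independent across children. The remaining ingredients — the base case, the Chernoff estimate, the union bound, and passing to the limit by monotonicity of the $\cG_n$ — are routine. Two bookkeeping remarks: the restriction $\ee < 1/4$ is exactly what makes the binomial mean $p_n\lceil cK\rceil$ exceed the threshold $cK/2$, and $(1-\ee)^2 \geq 1 - 2\ee$ is what makes the induction self-improving; moreover it is essential that Lemma~\ref{lem:good} produces $(r,c)$-good boxes (the factor of two being absorbed into $s = c/2$) rather than merely $\cG_1$ boxes, for otherwise one would be forced to demand that \emph{all} of the $\approx sK$ children at a height be $\cG_n$, a probability that decays in $K$.
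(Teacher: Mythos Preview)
Your proposal is correct and follows essentially the same approach as the paper: apply Lemma~\ref{lem:good} with $q_0$ close to $1$, use the Markov structure to see that children are conditionally i.i.d.\ copies of the original process, and then combine a Chernoff bound with a union bound over the $(1+2r)K$ heights to propagate a uniform lower bound on $p_n = \P[R_0 \in \cG_n]$ through the recursion. The paper packages the recursion as $\aal_{n+1} = F(\aal_n)$ with $q_0 = 7/8$ and shows $F(t) > 3/4$ for $t > 3/4$, while you run the same argument as an explicit induction with threshold $1 - 2\ee$; the two are the same proof.
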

\begin{proof}
Using Lemma~\ref{lem:good}, choose $r,c > 0$ and $K'_0 \in \N$ such that for all $K \geq K'_0$ we have that $q = q_{r,c,K}$ is greater than or equal to $q_0 \coloneqq 7/8$. Fix one such $K$. Let $A = (A_m)_{m \in \cM} \in \N_0^{K + 2rK}$ be the random vector representing the number of children of a given box at each height. Let $R_0$ be the box at scale $0$. Let $\Bin(n,p)$ denote the binomial distribution with parameters $n$ and $p$ and set $\aal_1 = \P[R_0 \in \cG_1]$.  For each $n \geq 2$, we also set
\begin{align*}
    \aal_n := \P[R_0 \in \cG_n] &= \sum_{\vec{a} \in \N_0^{K + rK}} \P[A = \vec{a}] \P[\text{$\forall j \in \cM$, at least $sK$ out of $a_j$ i.i.d.\ copies of $R_0$ are $\cG_{n-1}$}]\\
    &= \sum_{\vec{a}} \P[A = \vec{a}]\prod_{j \in \cM}\P[ \Bin(a_j,\aal_{n-1}) \geq sK].
\end{align*}
Motivated by this we define
\begin{equation}
\label{eq:fdef}
    F(t) = \sum_{\vec{a}} \P[A = \vec{a}]\prod_{j\in\cM}\P[\Bin(a_j,t) \geq sK],
\end{equation}
so that $\aal_{n+1} = F(\aal_n)$ for all $n$.
Now, $\P[A_j \geq cK, \;\forall j] \geq q_0$ by assumption, and clearly the probabilities in the product in~\eqref{eq:fdef} are increasing in $a_j$. Therefore\footnote{If $cK, sK$ are not integers, we can replace them by $\lceil{cK}\rceil, \lceil{sK}\rceil$, respectively, and make minor changes to the argument to get the same conclusion. This technicality could also be avoided by just assuming $cK, sK$ are integers, which is possible since we can always take $c$ to be rational and $K$ to be larger.}
\begin{align*}
F(t) \geq q_0\, \P[ \Bin(cK, t) \geq sK]^{K(1 + 2r)} &= q_0\left(1 - \P[ \Bin(cK, t) < sK]\right)^{K(1 + 2r)}\\
&\geq q_0\left(1 - K(1 + 2r)\P[ \Bin(cK, t) < sK]\right).
\end{align*}
Next we bound this last probability using a Chernoff bound to see for $t > 3/4$ that (recall $s = c/2$)
\begin{align*}
    \P[ \Bin(cK, t) < sK] &\leq \exp\left(\tfrac12{cK}\left[\log(2(1-t)) + \log(2t)\right]\right)\\
    &=  \exp\left(\tfrac12{cK}\log(4t(1-t))\right)\\
    &\leq \exp\left(\tfrac12{cK}\log\tfrac34\right).
\end{align*}
Inserting this into the previous inequality we have for $t > 3/4$ that
\begin{equation}
    F(t) \geq q_0\left(1 - K(1+2r)\exp\left(\tfrac12{cK}\log\tfrac34\right)\right).
\end{equation}
Therefore since we chose $q_0 = 7/8$, there exists $K_0$ (chosen so that $K_0 \geq K_0'$) such that if $K \geq K_0$ then $F(t) > 3/4$ whenever $t > 3/4$. Since $\aal_1 = \P[R_0 \in \cG_1] = q_0 > 3/4$, we have that $\aal_n > 3/4$ for all $n$, allowing us to conclude that $\P[R_0 \in \cG_\infty] = \lnti \P[R_0 \in \cG_n] \geq 3/4$.
\end{proof}

Finally we are in a position to address the question of removability. We do so by showing that if~$R_0$ is~$\cG_\infty$ for some choice of $r$, $c$, $K$ with $K > s^{-p}$, then~$\GG$ is not~$W^{1,p}$--removable. Once we have that $R_0 \in \cG_\infty$ we know we have an infinite nested family of $\cG_\infty$ boxes which we use to construct the exceptional function $f$. Let $\GG_n$ denote the union of all the boxes in this family at scale $n$. Then $\GG_{n+1} \subseteq \GG_n$ for each $n$ and $\cap_n \GG_n \subseteq \GG$. The following proposition (if we occasionally replace $\GG$ by $\cap_n \GG_n$) in fact shows that $\cap_n \GG_n$ is not removable, meaning it can be viewed essentially as a deterministic result on families of nested boxes satisfying certain properties.

\begin{prop}\label{prop:r0good}
Fix $p \in [1, \infty)$. If $R_0 \in \cG_\infty$ for $r,c,K$ with $K > s^{-p}$ (where $s = c/2$) then~$\GG$ is not~$W^{1,p}$--removable.
\end{prop}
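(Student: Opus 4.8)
The plan is to construct the exceptional function $u$ on a domain $D \supseteq \GG$ as a kind of ``Devil's staircase'' adapted to the nested box structure, building it as a limit $u = \lim_n u_n$ of continuous functions, where $u_n$ is (roughly) constant off $\GG_n$ and interpolates linearly inside each box at scale $n$. Concretely, I would assign to each box $R$ in the $\cG_\infty$ family a small increment $h(R) > 0$, where $h$ depends on the height of $R$ among the $K + 2rK$ possible heights, and set things up so that traversing $R$ from its bottom boundary to its top boundary increases $u$ by $h(R)$; the extension of $R$ to the strip just below it (the dashed region in Figure~\ref{fig:blocks_within_block}) is where the interpolation happens so that $u$ remains well-defined and continuous across the gaps between children. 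One natural choice: let $u$ on $\cap_n\GG_n$ be the pushforward under a monotone coding map of a self-similar measure that distributes mass $1/(K+2rK)$ (or some summable weights) across the heights at each stage, so that $u$ restricted to $\GG$ is a continuous monotone function whose ``derivative'' lives on a null set — this forces $u \notin W^{1,p}(D)$ because $u$ fails to be ACL, its vertical partial derivative having a singular part.

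The key steps, in order: (1) Fix the height weights and define $u_n$ explicitly — $u_n$ increases by a prescribed amount across each scale-$n$ box and its sub-extension, and is locally constant elsewhere; verify $u_n$ is continuous and $(u_n)$ converges uniformly to a continuous $u$ on $D$. (2) Show $u \notin W^{1,p}(D)$: since $\cap_n \GG_n \subseteq \GG$ has zero area, if $u$ were in $W^{1,p}(D)$ it would be ACL, but $u$ increases by a fixed positive amount across $\GG$ along (many) vertical lines while $\GG$ meets each such line in a null set, contradicting absolute continuity on those lines — I'd want a positive-measure set of vertical lines on which this happens, which is where the ``at least $sK$ children at each of $K+2rK$ heights'' condition is used to guarantee enough boxes survive horizontally. (3) Show $u \in W^{1,p}(D \setminus \cap_n \GG_n)$: here is where the hypothesis $K > s^{-p}$ enters. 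On $D \setminus \cap_n\GG_n$, each point is eventually outside some $\GG_n$, so $u$ agrees locally with some $u_n$, which is Lipschitz; the issue is integrability of $\nabla u_n$ and $\nabla u$ over the whole region. I'd estimate $\int |\nabla u_n|^p$ by summing over scale-$n$ boxes: a box at scale $n$ has width controlled by its subdivision, the increment of $u$ across it is roughly $(\text{product of } 1/(\text{branching at each level}))$, i.e. comparable to $(sK \cdot (K+2rK))^{-n}$-ish in the worst direction while the vertical extent is $K^{-n}$, and the number of boxes at scale $n$ grows like $(sK(K+2rK))^n$; the gradient on the sub-extension strip is of order $h(R)/(\text{strip height}) \sim h(R) K^{n}$. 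Multiplying the $p$-th power of the gradient by the box area and summing over all boxes yields a geometric series whose ratio is a power of $sK/K = s$ against $K$... the bookkeeping has to come out to a ratio $< 1$ precisely when $K > s^{-p}$, and the sum over $n$ of the total-variation-style increments must be finite so that $u$ itself (and $u_n \to u$ in $W^{1,p}_{\loc}$ off the limit set) is controlled.

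The main obstacle I expect is step (3): getting the $W^{1,p}$ bound off $\cap_n\GG_n$ with the sharp constant $K > s^{-p}$. One has to be careful that (a) the extensions of sibling boxes overlap vertically (as the figure warns), so a horizontal line can pass through two boxes at different heights — this means the interpolation regions must be arranged so that the gradient estimate still only sees one box's increment at a time, or at worst a bounded number; (b) the increments $h(R)$ must simultaneously sum to something finite along every path (for continuity and to pin down $u$ on $\GG$ for step (2)) while being large enough that $u$ genuinely jumps across $\GG$; (c) one must pass from the uniform limit $u_n \to u$ to the conclusion $u \in W^{1,p}(D\setminus\cap_n\GG_n)$, which requires the gradients $\nabla u_n$ to be Cauchy (or bounded, plus a.e.\ convergence) in $L^p$ on compact subsets avoiding the limit set — the geometric-series estimate with ratio $<1$ is exactly what gives this, so the whole argument hinges on the arithmetic $K \cdot s^p > 1 \iff K > s^{-p}$ working out and on not losing a factor in the number-of-boxes vs.\ size-of-increment trade-off. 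Steps (1) and (2) I expect to be comparatively routine given the ACL machinery recalled in Section~\ref{sec:non_removability}.
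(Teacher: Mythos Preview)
Your proposal has the orientation of the construction reversed, and this makes step (2) fail outright. You plan for $u$ to increase vertically across each box and to fail absolute continuity on vertical lines. But $\GG$ is a graph: the vertical line $\{x = x_0\}$ meets $\GG$, and hence $\cap_n \GG_n$, in at most the single point $(x_0, B_{x_0})$. Along that line your function would have zero derivative off a single point, and a continuous function with this property is absolutely continuous (indeed constant) there, since the complement of a point is two intervals and continuity patches the two constants together. So no continuous $u$ can fail ACL on vertical lines due to $\cap_n\GG_n$; the ``null set'' you invoke is not merely null but a single point, and that is too small to support a Devil's-staircase increase.

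The paper runs the staircase in the horizontal direction instead. A horizontal line $\ell_y$ meets $\GG$ in the level set $\{(t,y) : B_t = y\}$, which is Cantor-like, and the nested boxes along $\ell_y$ form a genuine Cantor hierarchy: at each stage one passes to the $sK$ children at the relevant height(s), dividing mass by $sK$ (the $(K+2rK)$-fold branching in heights is irrelevant along a fixed horizontal line). The exceptional function is $u_n(x,y)=\int_0^x A_n(t,y)\,dt$ with densities $A_n$ supported on $\GG_n$, constant in $x$ within each scale-$n$ box, and designed so that the horizontal integral over a box is preserved from scale $n$ to $n+1$. Then $u=\lim u_n$ is continuous, $\partial_x u = 0$ off $\GG$, yet $u(0,y) < u(T_1^0,y)$ for a range of $y$, so $u$ fails ACL on horizontal lines. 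For the $W^{1,p}$ bound off $\GG$ it is $\partial_y u$ (not $\partial_x u$) that must be controlled: one shows $|\partial_y u|\lesssim s^{-n}$ on $\GG_{n-1}\setminus\GG_n$ and uses that the total area of $\GG_n$ is at most a constant times $K^{-n}$ (since boxes at scale $n$ have disjoint $x$-projections and common height $\asymp K^{-n}$), giving $\sum_n K^{-n} s^{-np} < \infty$ precisely when $K > s^{-p}$. In your step (3) the roles of horizontal and vertical, and of the $sK$- versus $(K+2rK)$-fold branching, are therefore also swapped relative to what makes the arithmetic come out.
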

\begin{proof}
Let $D$ be a finite open square containing $\GG$, $R_0$, and all of the descendants of $R_0$. We will construct a function $u$ in $W^{1,p}(D \setminus \GG)$ which is not ACL on all of $D$.

It will be helpful from now on to redefine our boxes slightly, so that they include the area immediately below them. Specifically, attach to a box $R_n$ at scale $n$ the rectangle directly below this box of height $3rK^{-n}$ with the same width (this is the dashed blue box in Figure~\ref{fig:blocks_within_block}). From now on~$R_n$ will be assumed to mean this slightly larger box and will have height $(1+3r)K^{-n}$. Such a box can then have children at the $(1+2r)K$ different possible heights indexed by $\cM$ as in Definition \ref{def:good} (this is not $(1 + 3r)K$ since we do not look for any children too close to the top of the box). Suppose $R_0 \in \cG_\infty$ and fix an infinite family $\cF$ of nested~$\cG_\infty$ boxes so that every box in $\cF$ has exactly $sK$ children in $\cF$ at each of these possible heights.

Note that any point $(x,y) \in \R^2$ can be in the interior of at most one box at scale $n$. If such a box exists, call it $R_n(x,y)$ and let $w_n(x,y)$ be its width. If $(x,y)$ is in two boxes at scale $n$ (which can only occur when $(x,y)$ lies on the right-hand boundary of one box and the left-hand boundary of another) we define $R_n(x,y)$ and $w_n(x,y)$ to correspond to the rightmost of these two boxes. The height of $R_n(x,y)$ must be $(1+3r)K^{-n}$. Recall that $\GG_n$ denotes the closure of the union of the boxes in $\cF$ at scale $n$. To avoid some technicalities, it will be helpful to define $w_n(x,y) = w_n(x,y')$ in the case that $(x,y) \notin \GG_n$ but $(x,y') \in \GG_n$ for some $y'$ (notice that every vertical line passes through at most one box at any fixed scale).

Let $g$ be smooth, zero outside $[\tfrac14, \tfrac34]$, non-negative, and with $\inn{g} = 1$.  Set $A_0(x,y) = g(y)/w_0(x,y)$ for $x \in [0,T_1^0]$ and define $A_0(x,y) = 0$ for $x$ elsewhere (note that $w_0(x,y) = T_1^0$ for $x \in [0,T_1^0]$). Let $\phi \colon \R \to [0,1]$ be a smooth function that is 1 on $[-2r,1 - 3r]$, 0 outside $[-3r, 1-2r]$ and satisfies $\phi(x - 1) + \phi(x) = 1$ for all $x \in [0,1]$. For a rectangle $R$ at scale $n$ with lower boundary $(\ell - 3r)K^{-n}$ and upper boundary $(\ell + 1)K^{-n}$, we define 
\begin{equation}
    \phi^R(y) = \phi(K^n y - \ell).
\end{equation}
Then $\phi^R$ is smooth and supported on the projection of $R$ onto the $y$-axis. The functions $\phi^R$ will act as a kind of partition of unity. If $R_n(x,y)$ is not defined for some $(x,y) \notin \GG_n$, we set $\phi^{R_n(x,y)} \equiv 0$ on $\R$.

We define $A_{n}(x,y)$ inductively. Suppose $A_n$ has been determined. If $(x,y) \notin \GG_{n+1}$, set $A_{n+1}(x,y) = 0$. Otherwise, define
\begin{equation}
    A_{n+1}(x,y) = \frac{1}{sK}\frac{w_n(x,y)}{w_{n+1}(x,y)} A_n(x,y) \phi^{R_{n + 1}(x,y)}(y).
\end{equation}
Define $\aal_n(x,y) = w_n(x,y) A_n(x,y)$. Note that $\aal_0(x,y) \leq 1$ everywhere by the definition of $A_0$. Inductively we have
\begin{equation}
\label{eqn:alpha_bound}
\aal_n \leq (sK)^{-n}
\end{equation}
everywhere and that $A_n$ and~$\aal_n$ are constant in~$x$ within boxes in~$\GG_n$.

Suppose $(x,y) \in \GG_n$, and that $x_0$ and $x_1$ are the left and right boundaries of $R = R_n(x,y)$ respectively as in Figure~\ref{fig:blocks_within_block}. Consider the horizontal line segment $L$ at height $y$ in $R_n(x,y)$. We have that
\begin{equation}
\label{eqn:a_n_alpha}
\int_{x_0}^{x_1} A_{n}(t,y)dt = A_n(x,y)w_n(x,y) = \aal_n(x,y).
\end{equation}
Now consider the boxes at scale $n+1$ in $R$ as in Figure~\ref{fig:blocks_within_block}. As we integrate $A_{n+1}$ over $L$, each time we pass through a box, containing a point $(x^*, y)$ say, we pick up a contribution of $\aal_{n+1}(x^*,y)$. If $(\ell - 3r)K^{-n} < y < (\ell + 1 - 2r) K^{-n}$ for some $\ell \in \Z$, then there are two cases. First suppose~$L$ passes through boxes at scale $n+1$ at only a single height. If $R'$ is one of these boxes we have $\phi^{R'}(y) = 1$. Otherwise $L$ passes through boxes of two different heights. In this case, if $R^1, R^2$ are representatives of boxes at these two heights, we have $\phi^{R^1}(y) + \phi^{R^2}(y) = 1$. In either case, we pass through exactly $sK$ boxes of each height, and we get, from the definition of $\aal_{n+1}$,
\[ \int_{x_0}^{x_1} A_{n+1}(t,y)dt = \aal_n(x,y).\]
If $y \notin ((\ell - 3r)K^{-n},  (\ell + 1 - 2r)K^{-n})$, then we have $A_n(x,y) = 0$ on $R$ since $\phi^R(y) = 0$. In either case we get
\begin{equation}
    \int_{x_0}^{x_1} A_{n}(t,y)dt = \int_{x_0}^{x_1} A_{n+1}(t,y)dt.
\end{equation}

Define
\begin{equation}
\label{eqn:u_n_def}
    u_n(x,y) = \int_0^x A_n(t,y) dt.
\end{equation}
By the previous equation we have that $u_n = u_{n+1}$ off $\GG_n$. For a given point $(x,y) \in \GG_n$ let $(x_0, y)$ be a point on the left boundary of $R_n(x,y)$ and $(x_1, y)$ a point on the right. Since $u_n$ is non-decreasing in $x$, we have
\begin{flalign*}
& & \n{u_{n+1}(x,y) - u_n(x,y)}
&\leq \n{u_n(x_0, y) - u_n(x_1, y)} &\text{(since $u_{n+1}(x_0,y) = u_n(x_0,y)$)}\\
& & &\leq A_n(x,y)w_n(x,y) &\text{(by~\eqref{eqn:a_n_alpha} and~\eqref{eqn:u_n_def})}\\
& & &= \aal_n(x,y) &\text{(by~\eqref{eqn:a_n_alpha})} \\
& & &\leq (sK)^{-n} &\text{(by~\eqref{eqn:alpha_bound})}.
\end{flalign*}
It follows that $\inn{u_{n+1} - u_n} \leq \aal_n \leq (sK)^{-n}$ so that the $u_n$ converge uniformly to a continuous function $u$. We see that $u = u_n$ off $\GG_n$, so in particular $\del_x u = 0$ off $\GG_n$ for all $n$, and hence off $\GG$. Therefore, on a fixed horizontal line, we have that $\del_x u = 0$ a.e., but $u(0,y) < u(T_1^0, y)$ for all $y$ in some open set (by the definition of $g$ and the construction of $u$). This means that $x \mapsto u(x,y)$ is not absolutely continuous on these lines, so $u$ is not ACL on $D$. Similarly, since $u = u_n$ off $\GG_n$, we can show that $\del_y u$ is continuous in a small neighbourhood of any point not in $\GG$ (since it will be constant in $x$). Therefore $u$ is differentiable on $D\setminus \GG$, so showing $\del_y u \in L^p(D \setminus \GG)$ will ensure that $u \in W^{1,p}(D \setminus \GG)$.

Set
\[ e_n = \sup_{x,y} \n{\del_y \aal_n(x,y)}\]
(for fixed $x$, $\aal_n$ is smooth as a function of $y$). Then we get from the definition of $\aal_{n+1}$ that
\begin{align*}
    \frac{\del \aal_{n+1}(x,y)}{\del y} &= \frac{1}{sK}\left(\frac{\del \aal_{n}(x,y)}{\del y} \phi^{R_{n+1}(x,y)}(y) + \aal_n(x,y)\frac{\del \phi^{R_{n+1}(x,y)}(y)}{\del y} \right)\\
    \implies e_{n+1} &\leq \frac{1}{sK}\left(e_n + (sK)^{-n}{c_0} K^{n+1}\right)\\
    \implies e_{n+1} &\leq \frac{e_n}{sK} + c_0 s^{-(n+1)}.
\end{align*}
Here, $c_0$ is some constant that bounds the derivative of $\phi$ and we use that $\phi^{R_{n+1}(x,y)}$ has derivative proportional to $K^{n+1}$ since it is a scaling (and translation) of $\phi$ if $(x,y) \in \GG_{n+1}$. If $(x,y) \notin \GG_{n+1}$ then $\phi^{R_{n+1}(x,y)}$ is the zero function and the inequality holds trivially. We can conclude that 
\begin{equation}\label{eq:en_bound}
    e_n \lesssim s^{-n}.
\end{equation}

Now define
\[ v_n = \sup_{x,y} \n{\frac{\del u_n(x,y)}{\del y}}.\]
Let $(x,y_1), (x,y_2)$ be two points in a rectangle at scale $n$. Let $x'$ be the left boundary of this rectangle, so that $u_n = u_{n+1}$ here. Then we have that
\begin{align}
    \n{u_{n+1}(x,y_1) - u_{n+1}(x,y_2)} &\leq  \n{u_{n+1}(x',y_1) - u_{n+1}(x',y_2)} + \n{\int_{x'}^x (A_{n+1}(t,y_1) - A_{n+1}(t,y_2))dt} \notag\\
    &= \n{u_{n}(x',y_1) - u_{n}(x',y_2)} + \n{\int_{x'}^x (A_{n+1}(t,y_1) - A_{n+1}(t,y_2))dt} \notag\\
    &\leq v_n \n{y_2 - y_1} + \n{\int_{x'}^x (A_{n+1}(t,y_1) - A_{n+1}(t,y_2))dt} \notag\\
    &\leq \n{y_2 - y_1} (v_n +  s(1 + 2r)K^2 e_{n+1}) \quad\text{(see below for explanation)}. \label{eqn:un_y_change}
\end{align}
The last line comes from the following. Suppose we index the red boxes in Figure~\ref{fig:blocks_within_block} by a finite set~$I$, and for $i \in I$, let a box have left boundary at $a_i$ and right boundary at $b_i$. Let $t_i \in (a_i, b_i)$ and note that $A_n(w, y) = A_n(t_i, y)$ for all $w \in (a_i, b_i)$ since $A_{n+1}$ is constant in $x$ within boxes. Note that $\n{I} = sK \cdot (1 + 2r)K$. Then
\begin{align*}
    \n{\int_{x'}^x (A_{n+1}(t,y_1) - A_{n+1}(t,y_2))dt} &= \sum_{i \in I} \n{\int_{a_i}^{b_i} (A_{n+1}(t,y_1) - A_{n+1}(t,y_2))dt}\\
    &= \sum_{i \in I} \n{\aal_{n+1}(t_i, y_1) - \aal_{n+1}(t_i, y_2)}\\
    &\leq \sum_{i \in I} \n{y_2 - y_1}e_{n+1}\\
    &= \n{y_2 - y_1}\n{I}e_{n+1}.    
\end{align*}
Combining~\eqref{eqn:un_y_change} with~\eqref{eq:en_bound} we conclude that
\begin{equation}
    v_{n+1} \leq v_n + C's^{-(n+1)},
\end{equation}
where $C'$ depends on $r,s$ and $K$ but is constant in $n$.
We can conclude that
\begin{equation}
\label{eqn:v_n_bound}
    v_n \lesssim s^{-n}.
\end{equation}
Since $u = u_n$ off $\GG_n$, we have that $\n{\nabla u} = \n{\nabla u_n} \leq v_n$ on $\GG_{n-1} \setminus \GG_n$ for $n \geq 1$. Denote the area of $R_0 = \GG_0$ by $A$, which we have assumed is finite. Note that $\n{\nabla u_0}$ is bounded by some constant depending on $g$, which we use to deal with the integral over $D \setminus \GG_0$ below (combined with the fact that $D$ is bounded). By construction, the area of $\GG_n$ is at most $(1+3r)K^{-n}A$, so
\begin{flalign*}
    & & \int_{D \setminus \GG} \n{\nabla u}^p dx dy &\leq \int_{D\setminus \GG_0}\n{\nabla u}^p dx dy + \sum_{n = 0}^\infty \int_{\GG_{n}\setminus \GG_{n+1}} \n{\nabla u}^p dx dy\\ 
    & & &\leq \wt{C} + \sum_{n=0}^\infty (1+3r)K^{-n} A\,v_{n+1}^{p} &\hspace{-40pt}\text{($\wt{C}$ is a constant depending on $D$ and $g$)}\\
    & & &\lesssim \sum_{n=0}^\infty K^{-n}s^{-(n+1)p}  &\hspace{-40pt}\text{(by~\eqref{eqn:v_n_bound})}\\
    & & &\lesssim \sum_{n=0}^\infty (s^{-p} K^{-1})^n. &
\end{flalign*}
This sum is finite by our assumption that $s^{-p} < K$. Therefore $u \in W^{1,p}(D \setminus \GG)$ but is not ACL, and hence not in $W^{1,p}(D)$.
\end{proof}

Finally, we can conclude the proof of our initial statement. The above results show that if $B$ is a Brownian motion conditioned to hit 1 before $-1$ and then stopped, there is a positive probability that its graph is not removable for $W^{1,p}$. We want to show that if $\wt{B} = (\wt{B}_t)_{0 \leq t \leq 1}$ is the original standard Brownian motion of the theorem statement, then its graph $\GG$ is \emph{almost surely} not removable. The idea of the proof is to show that $\wt B$ contains many independent copies of processes whose law is a rescaling of $B$, each of which is non-removable with a certain probability, independently of the others, therefore showing that $\wt B$ is removable with large probability. We formalize this argument in the following proof.

\begin{proof}[Proof of Theorem~\ref{thm:non_removable}]
We have shown that if $B$ is a Brownian motion conditioned to hit 1 before $-1$ and then stopped, there is a positive probability $q > 0$ that its graph is not removable for $W^{1,p}$ (we fix $p$ here and just say ``removable" from now on). Since we can apply the same process to the reflection of $B$, it follows that the same holds for a Brownian motion $B'$ started at $0$ and stopped whenever it hits $1$ \emph{or} $-1$. Removability is also unaffected by scaling, so we can replace $\pm 1$ by $\pm a$ for any $a > 0$.

Let $W = (W_t)_{t \geq 0}$ be a one-dimensional Brownian motion started from 0 with graph $\GG$. Fix $a > 0$, set $S_0 = 0$ and, similarly to~\eqref{eq:tdef},  for $j \geq 0$ set
\begin{equation}
    S_{j+1} = \inf\left\{t > S_j \colon \n{W_t - W_{S_j}} = a\right\}.
\end{equation}
Now each $(W_t)_{S_j \leq t \leq S_{j+1}}$ has the law of a rescaled and translated version of $B'$ above, and so its graph is non-removable with probability at least $q$, independently of the rest of $W$. Let $r = 1-q < 1$. Therefore for $N \in \N$ the probability that the graph of $(W_t)_{0 \leq t \leq S_N}$ is removable is no greater than~$r^N$, since this is the union of the smaller graphs, and if any of them are not removable, we can see that the larger graph is not either.

Therefore, fix $\ee > 0$ and choose $N$ such that $r^N < \frac\ee2$. Choose $a > 0$ such that $\P[ S_N \geq 1] < \tfrac\ee2$. If $S_N \leq 1$ and the graph, $\GG'$ of $(W_t)_{0 \leq t \leq S_N}$ is not removable, then clearly the graph $\GG$ of $(W_t)_{0 \leq t \leq 1}$ is not removable. Therefore, by a union bound
\[\P[\text{$\GG$ is not removable}] \leq \P[\text{$\GG'$ is not removable}]  + \P[S_N \leq 1] \geq 1 - \ee.\]

$(W_t)_{0 \leq t \leq 1}$ has the law of the Brownian motion on $[0,1]$ in the theorem statement, and $\ee$ was arbitrary, so it follows that the graph of a standard Brownian motion on a finite interval is a.s.\ not removable for $W^{1,p}$.
\end{proof}

\section{\texorpdfstring{Removability for $p = \infty$}{Removability}}\label{sec:bdd}
\subsection{Hausdorff dimension and measure}
First we review the notions of Hausdorff dimension and measure (see e.g.\ \cite[\S4]{mp2010} for further details). Let $E \subseteq \R^d$ and let $\n{E} = \diam E$. For $\aal \geq 0$, we define
\begin{equation}
    \cH^\aal_\dd(E) = \inf\left\{\sum_{i = 1}^\infty \n{E_i}^\aal \colon E \subseteq \bigcup_i E_i \text{  and  } \n{E_i} \leq \dd \; \forall i\right\}, \qquad \cH^\aal(E) = \lim_{\dd \to 0} \cH^\aal_\dd(E).
\end{equation}
$\cH^\aal(E)$ is referred to as the \emph{$\aal$-Hausdorff measure} of $E$. The \emph{Hausdorff dimension} of $E$ is defined to be 
\begin{equation}
    \dim_\cH(E) = \inf\{\aal \geq 0 \colon \cH^\aal(E) = 0\}.
\end{equation}
As discussed in \cite{mp2010}, the zero set $\cZ = \GG \cap \{y = 0\}$ of a one-dimensional Brownian motion a.s.\ has $\dim_\cH(\cZ) = \tfrac12$ and $\cH^{1/2}(\cZ) = 0$. The same statement holds if we instead let $\cZ$ be the intersection of $\GG$ with any horizontal line $\ell$ (conditional on $\GG \cap \ell \neq \varnothing$). This will be crucial to the first lemma below.

\subsection{Proof of Theorem~\ref{thm:bdd}}
We will now show that the graph $\GG$ of a standard Brownian motion on $[0,1]$ is a.s.\ $\woi$--removable, thus proving Theorem~\ref{thm:bdd}. Our proof of the theorem is divided into Lemma~\ref{lem:zero_cover} and Corollary~\ref{cor:short_paths}, which show that~$\GG$ a.s.\ has a certain property, and Lemma~\ref{lem:removable}, which shows that any compact set satisfying this property is $\woi$--removable.

\begin{lemma}\label{lem:zero_cover}
Let $(B_t)_{0 \leq t \leq 1}$ be a standard one-dimensional Brownian motion started from $0$ and let $\GG \subseteq \C$ be its graph. Let $\ell_y$ be the  horizontal line at height $y$.   Let $F_y$ be the event that there exists $c_y > 0$ (random) such that for every $k \in \N$, there exists a finite collection $\mathbf{U}_k$ of disjoint open\footnote{We will also allow intervals of the form $[0, b_j)$ and $(a_j, 1]$, which are open in $[0,1]$. Indeed, when $y = 0$, since $B_0 = 0$, we will always need some interval of the form $[0, b)$ to cover $\cZ$. We will occasionally not mention this and pretend that each $U$ can always be written as $U = (a,b)$ when in fact this may not be the case. We will point this out explicitly when necessary.} intervals $U_{kj} = (a_{kj}, b_{kj})$ covering $\cZ \coloneqq \pi(\GG \cap \ell_y)$, where $\pi$ denotes projection onto the $x$-axis, and satisfying the following conditions:
\begin{enumerate}[(a)]
    \item\label{it:a_cond} $\n{U_{kj}} < 1/k$ for each $j,k$.
    \item\label{it:b_cond} $\sum_{j} \n{U_{kj}}^{1/2} < 1/k$ for all $k$.
    \item\label{it:c_cond} $S_k \coloneqq \sum_j M_{kj} < c_y k^{-1/8}$ for all $k$, where for each $k,j$,
    \begin{align*}
    a_{kj}^* = \inf\left(\cZ \cap {U}_{kj}\right),\quad
    b_{kj}^* = \sup\left(\cZ \cap {U}_{kj}\right),\quad
    M_{kj} = \sup\{\n{B_t - y} \colon a_{kj}^* \leq t \leq b_{kj}^*\},
\end{align*}
if $\cZ$ intersects ${U}_{kj}$, and $M_{kj}$ is defined to be $0$ otherwise (see Figure~\ref{fig:uj}).
\end{enumerate}
Then $\P[F_y] = 1$ and, moreover, $F_y$ holds for a.e.\ $y \in \R$ a.s.
\end{lemma}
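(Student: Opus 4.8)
The plan is to prove the statement in Lemma~\ref{lem:zero_cover} by first establishing $\P[F_y] = 1$ for a fixed $y$, and then deducing the ``a.e.\ $y$ a.s.'' statement via Fubini applied to the indicator of $F_y$ on the product space. For the fixed-$y$ part, the key structural input is the stated fact that $\cZ = \pi(\GG \cap \ell_y)$ is a.s.\ a compact set with $\dim_\cH(\cZ) = 1/2$ and $\cH^{1/2}(\cZ) = 0$; the latter gives, for each $k$, a countable cover of $\cZ$ by intervals with small diameters and with $\sum_j \n{U_{kj}}^{1/2}$ as small as we like, which — after passing to a finite subcover using compactness and shrinking/merging intervals — will handle conditions \eqref{it:a_cond} and \eqref{it:b_cond}. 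The delicate point is \eqref{it:c_cond}: we must control $\sum_j M_{kj}$, where $M_{kj}$ is the maximum oscillation of $B$ away from level $y$ over the time-interval $[a_{kj}^*, b_{kj}^*]$ spanned by the first and last zeros (of $B-y$) inside $U_{kj}$.

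First I would fix $y = 0$ without loss of generality (by the Markov property / the fact that conditional on $\GG\cap\ell_y\ne\varnothing$, $B-y$ near a hitting time of $y$ behaves like a Brownian motion from $0$) and analyze the modulus of continuity of $B$ near its zero set. The natural estimate is: on an interval $[a^*,b^*]$ whose endpoints are both zeros of $B$ and which lies inside an interval $U$ of length $< 1/k$, the quantity $\sup_{[a^*,b^*]}\n{B_t}$ is at most the oscillation of $B$ on an interval of length $\le \n{U}$, and by Lévy's modulus of continuity (or a simple Borel--Cantelli / union-bound argument over dyadic scales) this is a.s.\ bounded by $C\sqrt{\n{U}\log(1/\n{U})}$ uniformly over all subintervals of $[0,1]$ of length $\le\delta$, for $k$ large. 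Thus $M_{kj}\lesssim \sqrt{\n{U_{kj}}\,\log(1/\n{U_{kj}})}$, and so
\begin{equation*}
S_k = \sum_j M_{kj} \;\lesssim\; \sum_j \sqrt{\n{U_{kj}}\log(1/\n{U_{kj}})} \;\le\; \Big(\sup_j \sqrt{\log(1/\n{U_{kj}})}\,\n{U_{kj}}^{1/2}\Big)\cdot \frac{\sum_j \n{U_{kj}}^{1/2}}{\inf_j \n{U_{kj}}^{0}} ,
\end{equation*}
which is not quite clean; the better route is to choose, for each $k$, the cover so aggressively that $\sum_j \n{U_{kj}}^{1/2}$ is much smaller than $k^{-1/8}$ — say $\le k^{-2}$ — which is possible since $\cH^{1/2}(\cZ)=0$, and then bound $\log(1/\n{U_{kj}})\le \log(1/\eta_k)$ where $\eta_k$ is a lower bound on the interval lengths, while also arranging (by subdividing long intervals, which only helps (a) and (b)) that no interval is longer than, say, $k^{-1}$. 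Concretely I would take the cover so that $\sum_j \n{U_{kj}}^{1/2}\le k^{-C}$ for a large fixed $C$, giving $S_k \lesssim k^{-C}\sqrt{\log(1/\eta_k)}$, and since we are free to keep $\eta_k$ not-too-small (we never need intervals below any prescribed positive length as long as diam bound $1/k$ is met — indeed we could even insist $\n{U_{kj}}\ge e^{-k}$ after a harmless merging step), this is $\le k^{-C}\cdot\sqrt{k}\ll k^{-1/8}$ for large $k$. The random constant $c_y$ absorbs the finitely many small $k$.

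The main obstacle, and the step I would spend the most care on, is making the modulus-of-continuity bound $M_{kj}\lesssim\sqrt{\n{U_{kj}}\log(1/\n{U_{kj}})}$ \emph{uniform} over all the (random, $k$-dependent) intervals simultaneously, with a single a.s.\ finite random constant; this requires a deterministic statement of the form ``a.s., for all $0\le s<t\le 1$, $\sup_{[s,t]}\n{B}-\inf_{[s,t]}\n{B}\le C\sqrt{(t-s)\log(1/(t-s))}$ once $t-s$ is small'', which is exactly the uniform version of Lévy's modulus of continuity and can be extracted by a chaining/dyadic union bound. A secondary subtlety is that $a_{kj}^*,b_{kj}^*$ are defined as the extreme points of $\cZ\cap U_{kj}$, so one must check $\cZ\cap U_{kj}$ is nonempty and that these are genuine zeros (hence $B$ at both endpoints equals $y$), which is immediate since $a_{kj}^*,b_{kj}^*\in\clo{\cZ}=\cZ$ by closedness of $\cZ$; and when $y=0$ one must remember to use a half-open interval $[0,b)$ at the left end as the footnote notes, which costs nothing. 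Finally, for the ``a.e.\ $y$ a.s.'' conclusion: since $(\om,y)\mapsto \one_{F_y}(\om)$ is jointly measurable (the cover can be chosen in a measurable way, e.g.\ by a greedy algorithm on a fixed countable generating family of intervals), Fubini gives $\EE{\int_\R \one_{F_y^c}\,dy}=\int_\R\P[F_y^c]\,dy=0$, whence $\int_\R\one_{F_y^c}\,dy=0$ a.s., i.e.\ $F_y$ holds for a.e.\ $y$ a.s.
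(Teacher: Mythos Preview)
Your approach via L\'evy's modulus of continuity has a genuine gap in the control of condition~\eqref{it:c_cond}. The uniform bound $M_{kj}\lesssim\sqrt{\n{U_{kj}}\log(1/\n{U_{kj}})}$ is correct, but after summing it yields
\[
S_k \;\lesssim\; \sum_j \psi(\n{U_{kj}}),\qquad \psi(t)\coloneqq\sqrt{t\log(1/t)},
\]
and this right-hand side \emph{cannot} be made small subject to~\eqref{it:a_cond}. The exact Hausdorff gauge of $\cZ$ is $h(t)=\sqrt{t\log\log(1/t)}$ (Taylor--Wendel), so a.s.\ $\cH^h(\cZ)>0$; since $\psi(t)/h(t)=\sqrt{\log(1/t)/\log\log(1/t)}\to\infty$, one gets $\cH^\psi_{1/k}(\cZ)\gtrsim\sqrt{\log k/\log\log k}$ for large $k$. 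Hence \emph{every} cover with $\n{U_{kj}}<1/k$ satisfies $\sum_j\psi(\n{U_{kj}})\to\infty$, so your upper bound on $S_k$ diverges however aggressively you choose the cover. The proposed ``harmless merging step'' to enforce $\n{U_{kj}}\ge e^{-k}$ is not harmless: the same exact-gauge estimate shows that any cover with $\n{U_{kj}}\in[\eta_k,1/k]$ must have $\sum_j\n{U_{kj}}^{1/2}\gtrsim 1/\sqrt{\log\log(1/\eta_k)}$, so already condition~\eqref{it:b_cond} forces $\eta_k$ to be doubly exponentially small in $k$, and then $\sqrt{\log(1/\eta_k)}$ explodes.

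The paper avoids this obstruction by using that the endpoints $a_{kj}^*,b_{kj}^*$ lie in $\cZ$, so $M_{kj}$ is \emph{not} a generic Brownian oscillation but a supremum over a collection of excursions. Conditioning on $\cZ$, It\^o's excursion decomposition makes $B$ on each complementary interval a Brownian excursion of prescribed length; the maximum of a length-$\ell$ excursion has subgaussian tails at scale $\sqrt{\ell}$ (via the Kennedy--Chung identification with the range of a Brownian bridge), and a short computation gives $\E[M_{kj}\mid\cZ]\lesssim\n{U_{kj}}^{1/2}$ and $\var(M_{kj}\mid\cZ)\lesssim\n{U_{kj}}$ --- with \emph{no} logarithmic loss. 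This yields $\E[S_k\mid\cZ]\lesssim k^{-1}$, $\var(S_k\mid\cZ)\lesssim k^{-3/2}$, and Chebyshev plus a union bound over $k$ produces the random constant $c_y$. In short, L\'evy's modulus is sharp for the worst interval of a given length, but the intervals $[a_{kj}^*,b_{kj}^*]$ are bridges rather than free increments; capturing that gain is precisely what the excursion-theoretic conditioning does, and it is essential here. (Your Fubini step for the ``a.e.\ $y$'' conclusion is fine in spirit; the paper handles the joint measurability by restricting to covers with dyadic endpoints and checking the relevant events are open.)
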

\begin{proof}
\emph{Step 1. Setup, measurability and conclusion.\\}
We will show a slightly stronger condition holds by imposing certain conditions on the constants and families of intervals to ensure the event we are looking at is measurable. In particular, we will only consider $\N$-valued constants $c_y$ and finite collections of disjoint open intervals \emph{with dyadic endpoints}. We define an open interval with dyadic endpoints to be any interval of the form $U = (a,b)$ where $a, b \in \{k2^{-n} \colon n \geq 0, k \in \{0,1,\dots, 2^n\}\}$, meaning they are dyadic rationals in $[0,1]$. We also include intervals of the form $[0, b)$ and $(a, 1]$ where $a,b$ are dyadic, and in a slight abuse of terminology refer to them as open intervals since they are open as subsets of the space $[0,1]$. Let $\cU$ denote the collection of all finite families $\mathbf{U}$ of disjoint open intervals with dyadic endpoints. The set $\cU$ is countable and can be ordered in some natural way, for example by looking first at collections $\mathbf{U}$ where each interval has endpoints in $2^{-n}\Z$ for $n = 0$, then $n \in \{0,1\}$, and so on.

\begin{figure}[t]
\includegraphics[scale=0.8]{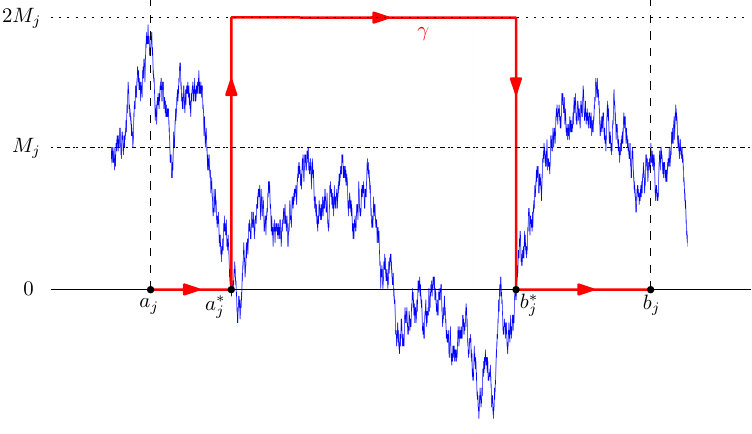}
\centering
\caption{Here we depict (in the case $y = 0$) an interval $U_j = (a_j, b_j) \in \cU$ where $a_j^*, b_j^*$ are defined as in the lemma statement (with $k$ suppressed in the notation). Also shown is part of the path $\gg$ constructed in Lemma~\ref{cor:short_paths} which intersects $\GG$ at most finitely many times.} 
\label{fig:uj}
\end{figure}

Let $(W, \cW, \P)$ be the set of continuous functions on $[0,1]$, let $\cW$ be the $\ss$-algebra generated by the coordinate projections, or equivalently the Borel $\ss$-algebra induced by the uniform norm $\inn{\cdot}$, and let $\P$ denote Wiener measure. 
Given constants $c,k \in \N$ and a family $\mathbf{U} = \{U_1, \dots, U_m\} \in \cU$, we define $E(c,k,\mathbf{U})$ to be the set of $(y,f) \in \R \times W$ where the family $\mathbf{U}$ covers $\cZ$, and where the conditions~\eqref{it:a_cond}, \eqref{it:b_cond} and~\eqref{it:c_cond} from the lemma statement hold, with $c$ in place of $c_y$, $\mathbf{U}$ in place of $\mathbf{U}_k$ and $f$ in place of $B$.
Note that~\eqref{it:a_cond} and~\eqref{it:b_cond} holding depends only on $k$ and $\mathbf{U}$ and not $y$ or $f$. We make the following claim and postpone its proof until after the proof of the lemma.

\begin{claim}\label{cl:mbl}
    The set $E(c,k,\mathbf{U})$ is measurable with respect to $\cB(\R) \otimes \cW$ where $\cB(\R)$ is the Borel $\ss$-algebra on $\R$. 
\end{claim}

Define
\[ E = \bigcup_{c \in \N} \bigcap_{k \in \N} \bigcup_{\mathbf{U} \in \cU} E(c,k,\mathbf{U}),\]
which, by the claim, is in $\cB(\R) \otimes \cW$. For a fixed $y \in \R$, define the event $E_y = \{f \in W \colon (y,f) \in E\}$, which is in $\cW$. Note that if $E_y$ holds, then so must $F_y$, the event described in the lemma statement. We will show in a moment that $\P[E_y] = 1$ but let us first show why this will complete the proof. In this case, $\P[E_y^c] = 0$ for every fixed $y \in \R$, so we have, using the measurability of the event $E$ and Fubini's theorem
\begin{align*}
    \E\!\left[\cL^1(\{y \colon (y,B) \in E^c\})\right] = \int_W \left(\int_\R \one_{(y,B) \in {E^c}}d\cL^1(y)\right)d\P[B] =  \int_\R \P[{E_y^c}]d\cL^1(y) = 0.
\end{align*}
Therefore $\cL^1(\{y \colon (y, B) \in E^c\}) = 0$ a.s., which shows that $E_y$ (and hence also $F_y$) holds for a.e.\ $y$ a.s., completing the proof.

\emph{Step 2. Showing $E_y$ holds a.s.\ for a fixed $y$.\\}
Assume first that $y = 0$. We will explain why this implies the general case at the end of the proof. 
Defining $E_y(c,k,\mathbf{U}) = \{f \in W \colon (y,f) \in E(c,k,\mathbf{U})\}$, we see that $E_y$ can be written as 
\[ E_y = \bigcup_{c \in \N} \bigcap_{k \in \N} \bigcup_{\mathbf{U} \in \cU} E_y(c,k,\mathbf{U}).\]
We will bound the probability of $\cup_\mathbf{U} E_y(c,k,\mathbf{U})$ in terms of $k$ and $c$, and show it is close to $1$. This will allow us to show that the probability of $\cap_{k} \cup_{\mathbf{U}} E_y(c,k,\mathbf{U})$ goes to $1$ as $c \to \infty$, thus showing that $\P[E_y] = 1$.

Suppose $c,k$ are fixed. Assume $\cH^{1/2}(\cZ) = 0$, which as mentioned above holds a.s. In this case, by the definition of Hausdorff measure, for any $k \in \N$ we can find an open (in $[0,1]$) cover $(V_j)$ of $\cZ$ such that $\n{V_j} < 1/(2k)$ for each $j$ and
\[ \sum_{j} \n{V_j}^{1/2} < \frac{1}{k\sqrt{2}}.\]
By the compactness of $\cZ$, we can assume this cover is finite. We may also assume without loss of generality that the $V_j = (c_j, d_j)$ are intervals with the usual caveat that if $c_j = 0$ or $d_j = 1$ then the interval may be closed at this endpoint.

Next we show that we may assume that the $V_j$ are disjoint. Suppose two of these intervals, say $V_1 = (c_1, d_1)$ and $V_2 = (c_2, d_2)$, have non-empty intersection. If one is contained in the other, we may simply remove it, so we can assume without loss of generality that $c_1 < c_2 < d_1 < d_2$. Since $\cZ$ has zero Lebesgue measure, $(c_2, d_1) \setminus \cZ$ is non-empty and open, and therefore contains a non-empty interval $(e_1, e_2)$. We can now redefine $V_1 = (c_1, d_1')$ and $V_2 = (c_2', d_2)$ where $e_1 < d_1' < c_2' < e_2$. Then $V_1$ and $V_2$ become smaller and the $V_j$ still cover $\cZ$. Repeating this process for every pair of intersecting intervals allows us to assume the $V_j$ are indeed disjoint. By a similar procedure, we can in fact conclude that their closures $\overline{V_j}$ are also disjoint. Then, for each $j$, we can find an open interval with dyadic endpoints $U_j$ containing $V_j$ such that $\n{U_j} \leq 2\n{V_j}$. Since the closures of the $V_j$ are disjoint, we can make this choice in such a way that the $U_j$ are also disjoint. It follows that there always exists $\mathbf{U} \in \cU$ covering $\cZ$ such that~\eqref{it:a_cond} and~\eqref{it:b_cond} in the lemma hold. Let $C(\mathbf{U}) \equiv C$ be the event that $\cZ$ is covered by $\mathbf{U}$ and that $\mathbf{U}$ satisfies~\eqref{it:a_cond} and~\eqref{it:b_cond} in the lemma (this final requirement does not depend on $B$ but including it simplifies notation). We have just shown that a.s., there exists $\mathbf{U} \in \cU$ such that $C(\mathbf{U})$ holds.

Fix $\mathbf{U} = \{U_1, \dots, U_m\} \in \cU$. Note that the event $C(\mathbf{U})$ depends on $B$ only through $\cZ$, the zero set of $B$, which is a random variable taking values in the set of closed subsets of $[0,1]$. 
Almost surely, $[0,1] \setminus \cZ$ is a countable collection of open intervals.  As is explained in \cite[Chapter XII]{revuz_yor}, the It\^o excursion decomposition of $B$ realizes it as a Poisson point process of excursions from $0$. This representation shows that conditional on $\cZ$, on each interval in the complement of $\cZ$, $B$ has the law of a (signed) Brownian excursion of a specified length.

Next we estimate the conditional probability $\P[E_y(c,k,\mathbf{U}) | \cZ]$, where $\cZ$ and $\mathbf{U}$ are such that $C$ holds. Fix one such $\cZ$, and let $\wt\P$ the law of $B$ conditional on $\cZ$.  We let $a_j^* = \inf(\cZ \cap U_j)$ and $b_j^* = \sup(\cZ \cap U_j)$.  For each $j$ such that $\cZ \cap U_j \neq \emptyset$, the Brownian motion on $(a_j^*, b_j^*)$ consists of countably many (signed) Brownian excursions of lengths $(\ell_j^n)_{n \in \N}$ where $\sum_n \ell_j^n = b_j^* - a_j^* \leq \n{U_j} =: \ell_j$. Let $M_j^n$ be the maximum of each of (the absolute value of) these excursions, so that $M_j = \sup_n M_j^n$, where $M_j$ is defined analogously to $M_{jk}$ in the lemma statement.

\begin{claim}\label{cl:be}
For $x \geq \sqrt{4\ell_j/e}$, we have $\wt\P[M_j \geq x] \leq 2\exp\left(-\frac{x^2}{4\ell_j}\right).$ 
\end{claim}
We postpone the proof until after completing the proof of the lemma. Assuming the claim, and introducing constants $d_1, d_2 \in \N$, we have
\begin{align*}
    \wt\E[M_j] &= \int_0^\infty \wt\P[M_j \geq x] dx \leq \sqrt{\frac{4\ell_j}{e}} + \int_0^\infty 2\exp\left(-\frac{x^2}{4\ell_j}\right)dx = \left( \sqrt{\frac{4}{e}} + \sqrt{4\pi}\right)\sqrt{\ell_j} \leq d_1 \n{U_j}^{1/2},\\
\var_{\wt\P}[M_j] &\leq \wt\E[M_j^2] = \int_0^\infty \wt\P[M_j \geq x^{1/2}]dx \leq \frac{4\ell_j}{e} + \int_0^\infty 2\exp\left(-\frac{x}{4\ell_j}\right)dx \leq \left( \frac{4}{e} + 8\right)\ell_j \leq d_2 \n{U_j}.
\end{align*}
Define $S_k = \sum_{j=1}^m M_j$. Then we can conclude, using~\eqref{it:a_cond} and~\eqref{it:b_cond} in the lemma statement, that
\begin{align*}
    \wt\E[S_k] &\leq d_1 \sum_j \n{U_j}^{1/2} \leq d_1 k\nv,\\
    \var_{\wt\P}[S_k] &\leq d_2\sum_j\n{U_j} \leq d_2k^{-3/2}.
\end{align*}
By Chebyshev's inequality,
\[\wt\P\!\left[S_k \geq c k^{-1/8} + d_1 k\nv\right] \leq \frac{\var_{\wt\P}[S_k]}{(c k^{-1/8})^2} \leq \frac{d_2k^{-3/2}}{c^2 k^{-1/4}} = \frac{d_2}{c^2} \cdot k^{-5/4}.\]
Let $c' = c + d_1$ and recall that if $C$ is true and $S_k < ck^{-1/8} + d_1k\nv$, then $E_y(c',k,\mathbf{U})$ holds  since $k\nv \leq k^{-1/8}$. By the definition of $\wt\P$, this means that for any $\mathbf{U} \in \cU$ we have
\[ \P[E_y(c',k,\mathbf{U}) | \cZ] \geq \left(1 - \frac{d_2}{c^2}k^{-5/4}\right)\one_{C(\mathbf{U})}.\]
The LHS is a conditional probability, so this inequality should be understood in the almost sure sense. Since $\cU$ is countable, the following also holds a.s.: 
\[ \P\!\left[\bigcup_{\mathbf{U} \in \cU} E_y(c',k,\mathbf{U})\middle|\cZ\right] \geq \sup_{\mathbf{U} \in \cU} \left[\left(1 - \frac{d_2}{c^2}k^{-5/4}\right)\one_{C(\mathbf{U})}\right] = 1 - \frac{d_2}{c^2}k^{-5/4},\]
where the last inequality holds since we have shown earlier in the proof that a.s., $C(\mathbf{U})$ holds for some $\mathbf{U} \in \cU$. We obtain
\[\P\!\left[\bigcup_{\mathbf{U} \in \cU} E_y(c',k,\mathbf{U})\right] = \E\!\left[\P\!\left[\bigcup_{\mathbf{U} \in \cU} E_y(c',k,\mathbf{U})\middle|\cZ\right]\right] \geq 1 - \frac{d_2}{c^2}k^{-5/4}.\]
By a union bound on the complement, and since $k^{-5/4}$ is summable, we conclude that
\[ \P\!\left[\bigcap_{k \in \N}\bigcup_{\mathbf{U} \in \cU} E_y(c',k,\mathbf{U})\right] \geq 1 - g(c'),\]
where $g(c') \to 0$ as $c' \to \infty$. Finally, it follows that $\P[E_y] = 1$ for $y = 0$, as claimed.

Suppose $y \neq 0$. Let $B'$ be a standard Brownian motion run until time $\tau_y + 1$ where $\tau_y = \inf\{t \geq 0 \colon B'_t = y\}$. Let $B''_t = \wt{B}_{\tau_y + t} - y$ so that $B''$ and $B$ have the same law. Therefore a.s.\ there exists a constant $c_y$ and families of intervals $\mathbf{U}_k'$ such that we can cover the zero set of $B''$ by $\mathbf{U}_k'$ and~\eqref{it:a_cond}, \eqref{it:b_cond} and~\eqref{it:c_cond} hold with $B''$ and $\mathbf{U}'$ replacing $B$ and $\mathbf{U}$ respectively. But the zero set of $B''$ corresponds to the level set $\{t \colon B'_t = y\}$. If we let $B$ be the restriction of $B'$ to $[0,1]$, then a.s., if we set $U_{kj} = U'_{kj} \cap [0,1]$ and let $\mathbf{U}_k$ be the collection of the sets $U_{kj}$ which are non-empty, then we see that $\mathbf{U}_k$ will cover $\cZ$ and satisfy the conditions of the lemma for each $k$. It follows that $\P[E_y] = 1$ for all $y$. Note that a special case of this is when $B$ does not hit $y$ before $t = 1$, in which case $\ell_y$ will not intersect $\GG$ at all; the statement in the lemma is then trivially satisfied with $\mathbf{U}_k$ being the empty family for all~$k$.
\end{proof}

We now return to the two claims made during the proof of the lemma.
\begin{proof}[Proof of Claim~\ref{cl:mbl}]
We will show the set $E(c, k, \mathbf{U})$ is open in $\R \times W$ where $W$ is equipped with the uniform norm. Suppose $(y,f) \in E(c,k,\mathbf{U})$, where $\mathbf{U} = \{U_1, \dots, U_m\}$ and $U_j = (a_j, b_j)$ for each~$j$, with the caveat that if $a_j = 0$ or $b_j = 1$, then this endpoint of the interval might be closed. Let $\cZ_y(f) = \{x \in [0,1] \colon f(x) = y\}$. The sets $\cZ_y(f)$ and $[0,1] \setminus \cup_j U_j$ are closed and disjoint, so the continuous function $x \mapsto \n{f(x) - y}$ is bounded below by some $\dd > 0$ on $[0,1] \setminus \cup_j U_j$. Therefore if $\n{y_1 - y} < \dd/2$ and $\inn{f_1 - f} < \dd/2$, we have that $\mathbf{U}$ still covers $\cZ_{y_1}(f_1)$. 

Next we want to show that condition~\eqref{it:c_cond} still holds if we change $y$ and $f$ slightly. This is made more difficult by the fact that the $a_j^*$ and $b_j^*$ may change, which necessitates the following argument. Suppose $S_k + \eta < ck^{-1/8}$ for some $\eta > 0$. For each $j$, define 
\begin{align*}
    U'_j = U_j \cap \{x  \colon \n{f(x) - y} \geq M_j + \tfrac{\eta}{2m}\},\quad
    a_j' = \sup \big(U_j' \cap [a_j, a_j^*]\big),\quad
    b_j' = \inf \big(U_j' \cap [b_j^*, b_j]\big).
\end{align*}
For a fixed $j$ there are now a few possibilities. Firstly, it may be that $f(x) \neq y$ on all of $U_j$, in which case we have defined $M_{kj} = 0$. If this occurs, then since $\cZ_y(f)$ is closed and the $U_j$ are disjoint, we also must have that $f(a_j), f(b_j) \neq y$. Therefore there exists $\ee_j > 0$ such that $\n{f(x) - y} > \ee_j$ on~$U_j$. Therefore if $\n{y_1 - y} < \ee_j/2$ and $\inn{f_1 - f} < \ee_j/2$ then $\cZ_{y_1}(f_1)$ will still not intersect $U_j$ so the value of $M_j$ corresponding to $y_1$ and $f_1$, which we denote by $M_j(y_1, f_1)$ will still be $0$.

Secondly, if $\cZ_y(f) \cap U_j \neq \varnothing$ and $U_j'$ is empty then, for any $\ee > 0$, if $\n{y_1 - y} < \ee/2$ and $\inn{f_1 - f} < \ee/2$ we will have that $M_j(y_1, f_1) \leq M_j + \eta/2m + \ee$, since in this case even if $a_j^*$ and $b_j^*$ change, they will always stay in $\overline{U}_j$, where we have control over $f$.

Finally, suppose $\cZ_y(f) \cap U_j \neq \varnothing$ and $U_j'$ is not empty. Suppose $U_j' \cap [a_j, a_j^*]$ is not empty and $U_j' \cap [b_j^*, b_j]$ is empty. By the definition of $M_j$ we will have that $U_j'$ and $(a_j^*, b_j^*)$ are disjoint, and since $f(a_j^*) = f(b_j^*) = y$ we see that we will actually have $a_j \leq a_j' < a_j^*$. In this case, by the definition of $a_j^*$ as the first time in $U_j$ that $f(x) = y$, we have that $\n{f(x) - y}$ must be bounded below by some $\ee_j > 0$ on $[a_j, a_j']$.  Therefore if $\n{y_1 - y} < \ee_j/2$ and $\inn{f_1 - f} < \ee_j/2$ then, since $f_1(x) \neq y_1$ on $[a_j, a_j']$, this interval will not intersect $[a_j^*(y_1, f_1), b_j^*(y_1, f_1)]$, where $a_j^*(y_1, f_1)$ is the new value of $a_j^*$ corresponding to $y_1, f_1$ (similarly for $b_j^*$) . Therefore, we will have that $M_j(y_1, f_1) \leq M_j + \eta/2m + \ee_j$. Note that the value of $b_j^*(y_1, f_1)$ doesn't matter since we have control of $f$ on $[b_j^*, b_j]$. The case where $U_j' \cap [b_j^*, b_j]$ is not empty is handled in a similar way.

To sum up, if we choose $\ee$ such that it is less than all the $\ee_j$ for all $j$ for which the first and third cases apply, and also that $m\ee < \eta/2$, then we finally have that $M_j(y_1, f_1) \leq M_j + \eta/2m + \ee$ for every $j$, so $S_k(y_1, f_1) \leq S_k + \eta/2 + m\ee < ck^{-1/8}$.

In conclusion, if $(y,f) \in E(c,k,\mathbf{U})$ then we can choose $\ee > 0$ small enough that if $\n{y_1 - y} < \ee/2$ and $\inn{f_1 - f} < \ee/2$ we will have that $\mathbf{U}$ still covers $\cZ_{y_1}(f_1)$ and that~\eqref{it:c_cond} still holds for $y_1$ and $f_1$, meaning that $(y_1, f_1) \in E(c,k,\mathbf{U})$. Therefore $E(c,k,\mathbf{U})$ is open in $\R \times W$, and hence is measurable, as claimed.
\end{proof}

\begin{proof}[Proof of Claim~\ref{cl:be}]
Note that, by a union bound,
\[ \wt\P[ M_j \geq x] = \wt\P\!\left[\sup_n M_j^n \geq x\right] \leq \sum_n \wt\P[M_j^n \geq x].\]
Recall that $M_j^n$ is the maximum process of a Brownian excursion of length $\ell_j^n$. It was shown in \cite{kennedy, chung} that $M_j^n$ has the same distribution as 
\[ \sup_{0 \leq t \leq \ell_j^n} B^0_t - \inf_{0 \leq t \leq \ell_j^n} B^0_t\]
where $B^0$ is a Brownian bridge of length $\ell_j^n$. For a Brownian bridge of length $\ell_j^n$ we have that $\wt\P[\sup_{0 \leq t \leq \ell_j^n} B^0_t \geq x] = \exp(-2x^2/\ell_j^n)$. Combining these results and using a union bound we have that
\begin{equation}
\label{eqn:max_bound}
\wt\P[M_j^n \geq x] \leq 2\,\wt\P\!\left[\max_t B_t^0 \geq \frac{x}{2}\right] = 2 \exp\left(- \frac{x^2}{2\ell_j^n}\right) = 2\exp\left(- \frac{(x/\sqrt{2\ell_j})^2}{(\ell_j^n/\ell_j)}\right).
\end{equation}
Set $h_n = \ell^n_j/\ell_n \in [0,1]$. Then we will show that
\begin{equation}
\label{eqn:w_h_n_ineq}
\frac{w^2}{h_n} \geq \frac{w^2}{2} - \log(h_n) \quad\text{for}\quad w \geq \sqrt{2/e}.
\end{equation}
Rearranging, we see that~\eqref{eqn:w_h_n_ineq} is equivalent to
\begin{equation}
\label{eqn:w_h_n_ineq_equiv}
w^2\left(1 - \frac{h_n}{2}\right) \geq -h_n \log h_n.
\end{equation}
The right hand side of~\eqref{eqn:w_h_n_ineq_equiv} is bounded from above by $1/e$ since $h_n \in [0,1]$ and the left hand side of~\eqref{eqn:w_h_n_ineq_equiv} is bounded below by $\tfrac12 w^2$.  Therefore~\eqref{eqn:w_h_n_ineq_equiv} holds whenever $w \geq \sqrt{2/e}$, which proves~\eqref{eqn:w_h_n_ineq}.

Applying~\eqref{eqn:w_h_n_ineq} with $w = x/\sqrt{2\ell_j}$ we have for $x \geq \sqrt{4\ell_j/e}$ that
\[ \wt\P[M_j^n \geq x] \leq 2\exp\left(-\frac{x^2}{4\ell_j} + \log(\ell_j^n/\ell_j)\right) = 2\exp\left(-\frac{x^2}{4\ell_j}\right)\cdot\frac{\ell_j^n}{\ell_j}.\]
Summing over all $n$ completes the proof.
\end{proof}

\begin{cor}\label{cor:short_paths}
    Almost surely, $\GG$ satisfies the following property. For any $z,w \in \C$ and any $\ee > 0$, there exists a path $\gg$ from $z$ to $w$ which consists of a finite union of line segments, intersects $\GG$ finitely many times, and has length at most $\sqrt{2}\nn{z - w} + \ee$.
\end{cor}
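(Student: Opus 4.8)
The plan is to assemble $\gg$ from finitely many horizontal and vertical line segments, exploiting two facts: $\GG$ is the graph of a function of $x$, so that \emph{every} vertical segment meets $\GG$ in at most one point; and, by Lemma~\ref{lem:zero_cover}, for a.e.\ height $y$ the associated coverings let us step over the part of $\GG$ near the horizontal line $\ell_y$ at a small length cost.

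First I would work on the a.s.\ event on which $F_y$ holds for a.e.\ $y\in\R$ and on which, for $y$ outside a Lebesgue-null set, $B_q\neq y$ for every dyadic $q\in[0,1]$; call the resulting full-measure set of \emph{admissible} heights $\cY$. Given $z=(x_1,y_1)$, $w=(x_2,y_2)$ and $\ee>0$, choose $y_1'\in\cY$ with $|y_1'-y_1|$ as small as we like, also (by excluding two further exceptional values) with $B_{x_1}\neq y_1'\neq B_{x_2}$. I would take $\gg$ to consist of three pieces: a vertical segment from $z$ to $z'=(x_1,y_1')$; a ``horizontal piece with detours'' at height $y_1'$ from $(x_1,y_1')$ to $(x_2,y_1')$; and a vertical segment from $(x_2,y_1')$ to $w$. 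The two vertical pieces meet $\GG$ in at most one point each by the graph property, and their lengths sum to at most $|y_1-y_2|+2|y_1'-y_1|$. Since $|x_1-x_2|+|y_1-y_2|\le\sqrt2\,\nn{z-w}$, it then suffices to build the middle piece so that it meets $\GG$ finitely often and exceeds $|x_1-x_2|$ in length by less than $\ee/2$.

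For the middle piece, set $y':=y_1'$, apply $F_{y'}$ with a large parameter $k$ to be fixed, and let $\mathbf U_k=\{U_j=(a_j,b_j)\}_{j=1}^m$ be the resulting finite disjoint cover of $\cZ:=\pi(\GG\cap\ell_{y'})$; put $a_j^*=\inf(\cZ\cap U_j)$, $b_j^*=\sup(\cZ\cap U_j)$ and $M_j:=M_{kj}$ (for $k$ large, no $U_j$ has $x_1$ or $x_2$ as an endpoint, since such a $U_j$ would be disjoint from $\cZ$ and could be discarded). The two key observations are: since $\cZ\subseteq\bigcup_j[a_j^*,b_j^*]$, the line $\ell_{y'}$ meets $\GG$ only over the intervals $[a_j^*,b_j^*]$; and $\GG$ restricted to $[a_j^*,b_j^*]$ lies in the box $[a_j^*,b_j^*]\times[y'-M_j,y'+M_j]$. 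Hence the middle piece runs along $\ell_{y'}$, and over each $[a_j^*,b_j^*]$ it detours: straight up from $(a_j^*,y')$ to height $y'+M_j+\delta_j$, across to $(b_j^*,y'+M_j+\delta_j)$, and down to $(b_j^*,y')$. The horizontal part of a detour lies strictly above $\GG$ by the box bound, and each vertical part of a detour meets $\GG$ only at its lower endpoint $(a_j^*,B_{a_j^*})=(a_j^*,y')$, resp.\ $(b_j^*,y')$, because $B_{a_j^*}=B_{b_j^*}=y'$ and $\GG$ is a graph; so the whole middle piece meets $\GG$ in at most $2m$ points. Each detour adds $2(M_j+\delta_j)$ to the length, so the excess over $|x_1-x_2|$ is at most $2\sum_j M_j+2\sum_j\delta_j<2c_{y'}k^{-1/8}+2\sum_j\delta_j$ by condition~\eqref{it:c_cond}; taking $k$ large (depending on the random constant $c_{y'}$ and on $\ee$) and the $\delta_j$ with $\sum_j\delta_j$ small makes this $<\ee/2$. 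Assembling the three pieces and taking $|y_1'-y_1|<\ee/4$ then gives a path of length $<\sqrt2\,\nn{z-w}+\ee$ meeting $\GG$ finitely often.

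The main difficulty is not a single estimate but the treatment of degenerate configurations: one must keep the endpoints $x_1,x_2$ of the middle piece away from the corners $a_j^*,b_j^*$ and keep $z',w'$ off $\GG$ (both handled by the perturbation of $y_1'$ inside $\cY$ and by $B$ missing $y_1'$ at the relevant times), and one must deal with the case where $x_1$ or $x_2$ lies inside some $[a_j^*,b_j^*]$ by replacing the corresponding full detour with a ``partial detour'' that starts (or ends) at $x_i$ rather than at the box corner, which has the same length cost $2(M_j+\delta_j)$ and contributes at most two further intersections with $\GG$. Note that only condition~\eqref{it:c_cond} of Lemma~\ref{lem:zero_cover} is used here, via $S_k\to0$; conditions~\eqref{it:a_cond} and~\eqref{it:b_cond} enter the subsequent removability lemma instead.
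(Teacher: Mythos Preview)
Your proof is correct and follows essentially the same approach as the paper's: both pick a nearby admissible height on which $F_y$ holds, run along $\ell_y$ with vertical detours over each $[a_j^*,b_j^*]$ of height comparable to $M_j$, and bound the excess length by a multiple of $S_k$. The minor cosmetic differences (you perturb $y_1$ and use two vertical end-segments while the paper first reduces to the same-height case; you detour to $y'+M_j+\delta_j$ while the paper uses $y+2M_j$) do not affect the argument. One small inaccuracy in your closing remark: conditions~\eqref{it:a_cond} and~\eqref{it:b_cond} are not used in the subsequent Lemma~\ref{lem:removable} (which is purely deterministic); they are used inside the proof of Lemma~\ref{lem:zero_cover} to obtain the probability estimates leading to~\eqref{it:c_cond}.
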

\begin{proof}
Fix $\ee > 0$. Suppose first that $z = (u, y')$, $w = (v, y')$ for some $u,v,y' \in \R$. By Lemma~\ref{lem:zero_cover}, there exists $y \in \R$ with $y > y'$ and $\n{y - y'} < \ee/4$ such that $F_y$ holds. Choose $k \in \N$ such that $c_y k^{-1/8} < \ee/8$ and let $\mathbf{U}_k = \{ U_1, \dots, U_m \}$ be a family of intervals as described in the lemma. If any $U_j$ does not intersect $\cZ$ we can remove it, so we can assume that $a_j^* = \inf(\cZ \cap U_j)$ and $b_j^* = \sup(\cZ \cap U_j)$ lie in $U_j$ for each $j$. We can also assume that the $U_j$ are in increasing order, so that $a_1^* \leq b_1^* \leq a_2^* \leq \cdots \leq b_m^*$. Let $E_j = [a_j^*, b_j^*]$ for each $j$.

We define a path $\gg$ as follows (see also Figure~\ref{fig:uj}). Suppose first $u \in E_{j_0}$. From $z$, move vertically to upwards until $\gg$ hits the horizontal line at height $y + 2M_j$. Then move to the right until we are at $(b_{j_0}^*, y + 2M_j)$. Then move downwards to $(b_{j_0}^*, y)$.\footnote{if $u = b_{j_0}^*$ then $\gg$ may immediately retrace itself in the opposite direction after hitting $(b_{j_0}^*, y + 2M_j)$. This is not an issue, but if we want $\gg$ to be simple we can just remove the part that is traced twice from $\gg$.} Note that the only time $\gg$ intersects $\GG$ is possibly during the first vertical segment and at $(b_{j_0}^*, y)$. We then move to the right until we hit $a_{j_0 + 1}^*$. For each subsequent $j \geq j_0$ we perform a similar procedure. Starting at $(a_j^*, y)$ move from $(a_j^*, y)$ to $(a_j^*, y + 2M_j)$ to $(b_j^*, y + 2M_j)$ to $(b_j^*, y)$ and finally on to $(a_{j+1}^*, y)$. If we reach a $j_1$ such that $v \in E_{j_1}$, we perform a reversed version of the procedure we used to start $\gg$. If $v$ is not in any of the $E_j$, then we simply stop $\gg$ when it hits $(v, y)$, at which point we move back down to $w = (v, y')$. If $u$ is instead not in any of the $E_j$, we start by moving upwards from $z = (u,y')$ to $(u, y)$, then moving right until we hit one of the $a_j^*$, after which we perform the procedure described above. Note that during this process $\gg$ intersects $\GG$ at most finitely many times.

The outcome of this is that the total length $\ell(\gg)$ of $\gg$ is bounded by $2(y - y') + \sum_{j=1}^m 4M_j + (v-u)$, where the first term and the sum correspond to vertical distance traveled and the last term to horizontal distance. Therefore
\[\ell(\gg) \leq 2(y - y') + \sum_{j=1}^m 4M_j + (v-u) < \frac{\ee}{2} + 4S_j + \n{v-u} < \nn{z - w} + \ee.\]

Now let $z = (z_1, z_2), w = (w_1, w_2)$ be general. Set $z' = (z_1, w_2)$ and let $\gg$ be the concatenation of the vertical line segment from $z$ to $z'$, and the path $\gg'$ that we can construct from $z'$ to $w$ by the above. In this case,
\[ \ell(\gg) \leq \n{z_2 - z_1} + \nn{z' - w} + \ee \leq \sqrt{2}\nn{z - w} + \ee,\]
as required.  
\end{proof}

We have shown that $\GG$ a.s.\ satisfies the preconditions of the following lemma, which will then complete the proof of Theorem~\ref{thm:bdd}.

We briefly mention that our proof actually slightly extends the result of \cite[Theorem 6]{tecu} in the deterministic case. Indeed, suppose $\GG'$ is the graph of a deterministic function $g \colon [0,1] \to \R$ that is $1/2$-H\"older continuous and $\cH^{3/2}(\GG') = 0$. In this case, if we define $\cZ = \pi(\GG' \cap \ell)$ then $\cH^{1/2}(\cZ) = 0$ for a.e.\ horizontal line $\ell$ (see e.g. \cite[Theorem 7.7]{mattila}). It follows that on such lines $\ell$, for each $k \in \N$ we can cover $\cZ$ by a family of intervals $\mathbf{U}$ satisfying~\eqref{it:a_cond} and~\eqref{it:b_cond} in Lemma~\ref{lem:zero_cover}. Let $U_j = (a_j, b_j)$ for each $U_j \in \mathbf{U}$ and, using the same notation as in the proof of Lemma~\ref{lem:zero_cover}, define $M_j = \sup\{g(t) - y \colon a_j^* \leq t \leq b_j^*\}$. Then $M_j \leq c\n{U_j}^{1/2}$ for some fixed constant $c > 0$ depending on $g$ using H\"older continuity (which replaces the probabilistic argument given above). Then we immediately get that $S_k \leq ck\nv$ for all $k$ (using~\eqref{it:b_cond}) which is stronger than the bound shown in Lemma~\ref{lem:zero_cover}. Therefore, we can repeat the proof of the preceding corollary to show that this deterministic graph satisfies the property described in the corollary statement. This means that the following lemma also applies in this case, showing that~$\GG'$ is $\woi$--removable.

\begin{lemma}\label{lem:removable}
Let $K \subseteq \C$ be compact and suppose there exists $C > 0$ such that for any $z,w \in \C$ and any $\ee > 0$, there exists a path $\gg$ from $z$ to $w$ which consists of a finite union of line segments, intersects $K$ finitely many times, and has length at most $C\nn{z - w} + \ee$. Then $K$ is $W^{1,\infty}$--removable.
\end{lemma}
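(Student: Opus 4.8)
The plan is to reduce to the case $D=\C$ and then show that the path hypothesis forces any admissible $f$ to be globally Lipschitz. Recall from Section~\ref{subsec:def_and_background} (see \cite{ntalampekos_removability_theorem}) that $W^{1,\infty}$--removability is an intrinsic property of $K$, so it suffices to prove that every continuous $f\colon\C\to\R$ with $f\in W^{1,\infty}(\C\setminus K)$ lies in $W^{1,\infty}(\C)$. Set $L=\nn{\nabla f}_{L^\infty(\C\setminus K)}<\infty$, and note that $f$ is bounded on $\C$ (it is bounded on $\C\setminus K$ and continuous on the compact set $K$). I would then establish that $\n{f(z)-f(w)}\le CL\nn{z-w}$ for all $z,w\in\C$; granting this, $f$ is Lipschitz, hence, by Rademacher's theorem and the fact that the distributional gradient of a Lipschitz function agrees a.e.\ with its classical gradient, $\nn{\nabla f}_{L^\infty(\C)}\le CL$, so $f\in W^{1,\infty}(\C)$ and $K$ is $W^{1,\infty}$--removable.

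For the Lipschitz estimate, fix $z,w$ and $\ee>0$, and let $\gg\colon[0,1]\to\C$ be a polygonal path from $z$ to $w$ of length $\ell(\gg)\le C\nn{z-w}+\ee$ meeting $K$ at only finitely many parameter values $t_1<\dots<t_N$; put $t_0=0$ and $t_{N+1}=1$. For each $i$, the path restricted to $[t_i,t_{i+1}]$ traces a polygonal arc $\Gamma_i$ whose relative interior is disjoint from $K$, i.e.\ $\gg((t_i,t_{i+1}))\subseteq\C\setminus K$. The crux is the bound
\[ \n{f(\gg(t_i))-f(\gg(t_{i+1}))}\le L\,\ell(\Gamma_i). \]
To prove it, take $t_i<s<s'<t_{i+1}$, so $\gg([s,s'])$ is a compact polygonal arc lying in the open set $\C\setminus K$; splitting it into its finitely many constituent line segments and using the standard fact (a consequence of absolute continuity on lines, cf.\ Section~\ref{sec:non_removability} and \cite{mazya}) that the continuous representative of a $W^{1,\infty}$ function on an open set is $\nn{\nabla f}_{L^\infty}$--Lipschitz on every convex subset of that set, one gets $\n{f(\gg(s))-f(\gg(s'))}\le L\,\ell(\gg([s,s']))$. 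Letting $s\downarrow t_i$ and $s'\uparrow t_{i+1}$, and using that $f$ is continuous on all of $\C$ — here it is essential that $f$ is continuous at $\gg(t_i),\gg(t_{i+1})$, which may lie in $K$ — together with $\ell(\gg([s,s']))\to\ell(\Gamma_i)$, yields the displayed inequality. Summing over $i=0,\dots,N$ gives $\n{f(z)-f(w)}\le L\sum_i\ell(\Gamma_i)=L\,\ell(\gg)\le L(C\nn{z-w}+\ee)$, and letting $\ee\downarrow 0$ finishes the estimate.

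The routine portions are the two reductions (intrinsicality of removability at the start, and the Lipschitz-implies-$W^{1,\infty}$ step at the end). The step requiring genuine care is the per-arc bound: the control on $f$ is purely as a Sobolev function on the open set $\C\setminus K$, so a priori $f$ is only known to be $L$-Lipschitz along segments that stay strictly inside $\C\setminus K$, and the work lies in transferring this to a bound on the oscillation of $f$ across the finitely many points where $\gg$ touches $K$ — precisely where the hypothesis that $f$ is continuous on the removable set is used, via the limiting argument as $s\downarrow t_i$ and $s'\uparrow t_{i+1}$. A minor point to verify is that ``$\gg$ meets $K$ finitely many times'' may be read as ``$\gg^{-1}(K)$ is finite'', which is harmless since the paths produced by Corollary~\ref{cor:short_paths} may be taken simple.
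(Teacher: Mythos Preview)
Your proof is correct and follows essentially the same route as the paper's: reduce to $D=\C$, use that $f\in W^{1,\infty}(\C\setminus K)$ is $L$-Lipschitz along segments contained in $\C\setminus K$, propagate this along the polygonal path $\gg$ using continuity of $f$ at the finitely many points of $\gg\cap K$, and then pass from globally Lipschitz to $W^{1,\infty}(\C)$. You spell out in more detail the limiting step $s\downarrow t_i$, $s'\uparrow t_{i+1}$ that the paper compresses into ``using this fact and continuity'', and you invoke Rademacher where the paper cites \cite[Theorem 4.1]{heinonen}, but these are the same argument.
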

\begin{proof}
To show $\GG$ is $W^{1,\infty}$--removable, we need to show that any continuous $f \in W^{1,\infty}(\C \setminus K)$ is in $W^{1,\infty}(\C)$. It can be shown (see e.g. \cite[Theorem 4.1]{heinonen}) that any such $f$ is locally $L$-Lipschitz on $\C \setminus K$ where $L = \inn{\nabla f}$. That is, every $x \in \C \setminus K$ has a neighborhood on which $f$ is $L$-Lipschitz. It follows that $\n{f(z) - f(w)} \leq L\nn{z - w}$ whenever the line connecting $z$ and $w$ does not intersect $K$. Using this fact and continuity, we can show that if a path $\gg$ from $z$ to $w$ is a finite union of line segments that intersects $\GG$ finitely many times, then $
\n{f(z) - f(w)} \leq L\ell(\gg)$.

By the hypotheses of the lemma, for any $z,w \in \C$ and $\ee > 0$ there exists such a path $\gg$ connecting $z$ and $w$ with $\ell(\gg) \leq C\nn{z-w} + \ee$, meaning that 
\[ \n{f(z) - f(w)} \leq CL\nn{z - w} + L\ee \quad\text{for all}\quad \ee > 0,\]
and hence that $f$ is Lipschitz on $\C$ with Lipschitz constant no larger than $CL$. Since $f$ is in $\woi(\C\setminus K)$ it is bounded, so using \cite[Theorem 4.1]{heinonen} again shows that $f \in \woi(\C)$, completing the proof.
\end{proof}

\bibliographystyle{abbrv}
\bibliography{references}

\end{document}